\newcommand{\cat}[1]{\mathbf{\boldsymbol{\mathsf{#1}}}}
\newcommand\restr[2]{{
  \left.\kern-\nulldelimiterspace 
  #1 
  \vphantom{\big|} 
  \right|_{#2} 
  }}
\let\oldproof\proof
\renewcommand{\proof}{\color{black}\oldproof} 
\renewcommand{\lim}{\mathsf{lim}}
\renewcommand{\hom}{\hspace{0.1em}\mathsf{Hom}}
\DeclareMathOperator{\id}{\mathsf{id}}
\DeclareMathOperator{\End}{\mathsf{End}}
\newtheorem{lemma}{Lemma}
\let\oldpflemma\pflemma
\renewcommand{\pflemma}{\color{black}\oldpflemma}
\newcommand\dirlim{\mathop{\mathpalette\varlim@{\rightarrowfill@\scriptstyle}}\nmlimits@}
\newcommand\invlim{\mathop{\mathpalette\varlim@{\leftarrowfill@\scriptstyle}}\nmlimits@}
\newcommand{\spmat}[1]{%
  \left[
  \let~=&
  \begin{smallmatrix}#1\end{smallmatrix}
  \right]
}}
\theoremstyle{thmstyleone}%
\newtheorem{theorem}{Theorem}
\newtheorem{proposition}[theorem]{Proposition}%
\theoremstyle{thmstyletwo}%
\newtheorem{example}{Example}%
\theoremstyle{thmstylethree}%
\newtheorem{definition}{Definition}%
\begin{document}
\title[Notes on Pointwise Finite-Dimensional $2$-Parameter Persistence Modules]{Notes on Pointwise Finite-Dimensional $2$-Parameter Persistence Modules}


\author*[1]{\fnm{Wenwen} \sur{Li}}\email{wli11uco@gmail.com}
\author[2]{\fnm{Murad} \sur{\"Ozayd\i n}}\email{mozaydin@ou.edu}

\affil*[1]{\orgdiv{Department of Mathematics and Computer Science}, \orgname{Hobart and William Smith Colleges}, \orgaddress{\street{47645 College Dr}, \city{Geneva}, \postcode{14456}, \state{New York}, \country{USA}}}
\affil[2]{\orgdiv{Department of Mathematics}, \orgname{University of Oklahoma}, \orgaddress{\street{601 Elm Ave}, \city{Norman}, \postcode{73019}, \state{Oklahoma}, \country{USA}}}



\abstract{In this paper, we study pointwise finite-dimensional (p.f.d.) $2$-parameter persistence modules where each module admits a finite convex isotopy subdivision. We show that a p.f.d. $2$-parameter persistence module $M$ (with a finite convex isotopy subdivision) is isomorphic to a $2$-parameter persistence module $N$ where the restriction of $N$ to each chamber of the parameter space $(\mathbb{R},\leq)^2$ is a constant functor. Moreover, we show that the convex isotopy subdivision of $M$ induces a finite encoding of $M$. Finally, we prove that every indecomposable thin $2$-parameter persistence module is isomorphic to a polytope module.}

\keywords{$2$-parameter persistence module, representation over poset}


\pacs[MSC Classification]{55N31}

\maketitle
\tableofcontents
\section{Introduction}\label{sec1}
Persistent homology is a mathematical concept widely applied to analyze the topological features of data clouds. Let $\mathbb{F}$ be a field and $(P,\leq)$ be the set $P$ with a partial order. A \textit{persistence module} over $(P,\leq)$ is a family of $\mathbb{F}$-vector spaces $\{Mi\mid i\in P\}$ and a doubly-indexed family of linear maps $\{M(i\leq j): Mi\rightarrow Mj\mid i\leq j\}$ such that $M(j\leq k)\circ M(i\leq j)=M(i\leq k)$ for any $i\leq j\leq k$ in $P$ and $M(i\leq i)=\mathrm{id}_{Mi}$ for all $i\in P$. Equivalently, $M$ can be viewed as a functor $M:(P,\leq)\rightarrow \cat{Vect}_{\mathbb{F}}$. We use $\cat{Vect}_{\mathbb{F}}^{(P,\leq)}$ to denote the category of persistence modules over $(P,\leq)$. A persistence module $M$ is called \textit{pointwise finite-dimensional} (p.f.d.) when $Mi$ is a finite-dimensional vector space for all $i\in P$, and we denote the full subcategory of $\cat{Vect}_{\mathbb{F}}$ consisting of all finite-dimensional vector spaces over $\mathbb{F}$ as $\cat{vect}_{\mathbb{F}}$. In particular, when $(P,\leq)=(\mathbb{R},\leq)^n$ with the product order\footnote{In other words, $(a_1,\dots, a_n)\leq (b_1,\dots, b_n)\in(\mathbb{R},\leq)^n$ if and only if $a_i\leq b_i$ for all $i=1,\dots,n$.}, the objects of $\cat{vect}_{\mathbb{F}}^{(\mathbb{R},\leq)^n}$ are called \textit{$n$-parameter p.f.d. persistence modules}.

Every $n$-parameter p.f.d. persistence module $M$ is equivalent to a $\mathbb{R}^n$-graded module over $\mathbb{F}[0,+\infty)^n$~\cite{lesnick2015theory}. When $n=1$, an algebraic invariant (called \textit{barcode}) of $M$ can be obtained by assigning an interval to each direct summand of $M$. Botnan and Crawley-Boevey prove that the barcode is well-defined~\cite{botnan2020decomposition}. Barcode can be viewed as the persistence analog of the Betti number~\cite{ghrist2008barcodes}.

When $n\geq 2$, there is no structure theorem available for $\mathbb{R}^n$-graded modules over $\mathbb{F}[0,+\infty)^n$. Therefore, there is no algebraic characterization of multiparameter p.f.d. persistence modules that is analogous to the barcodes of the $1$-parameter p.f.d. persistence modules. Moreover, the indecomposable submodules of a multiparameter p.f.d. persistence module can be very complicated. For example, Buchet and Escolar show that any $n$-parameter persistence module can be embedded as a slice of an indecomposable $(n+1)$-parameter persistence module~\cite{BE-2020}~\cite{BE-2019}. 

In this paper, we first present some results about the $2$-parameter p.f.d. persistence modules that admit a \textit{finite convex isotopy subdivision} (Definition \ref{def-isotopy-subdivision}). Moreover, we present a result regarding the indecomposable thin $2$-parameter persistence modules.

\subsection*{Main Results} Here are the main results of this paper: 
\begin{theorem}\label{thm-sim-1}
Given $M\in\cat{vect}_{\mathbb{F}}^{(\mathbb{R},\leq)^{2} }$. Assume there exists a finite convex isotopy subdivision of $(\mathbb{R},\leq)^{2}$ subordinate to $M$. Then $M\cong N$, where $N:(\mathbb{R},\leq)^{2}\rightarrow \cat{vect}_{\mathbb{F}}$ is defined as follows:
\begin{itemize}
    \item For every chamber $J$, $Nj=\lim_JM$ for all $j\in J$;
    \item For all $j_1\leq j_2$, let $J_1$ be the chamber containing $j_1$, and $J_2$ be the chamber containing $j_2$. Then $N(j_1\leq j_2): Nj_1\rightarrow Nj_2$ is
    $$N(j_1\leq j_2)=\begin{cases} \id_{\lim_JM}, & \mbox{if } J:=J_1=J_2 \\\phi_{J_1J_2} , & \mbox{else } \end{cases}$$
\end{itemize}
\end{theorem}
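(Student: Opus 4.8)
The plan is to build an explicit isomorphism $M \cong N$ by working chamber by chamber. First I would recall that a convex isotopy subdivision subordinate to $M$ partitions $(\mathbb{R},\leq)^2$ into finitely many pieces — chambers (maximal pieces), walls, and vertices — such that $M$ restricted to each chamber $J$ is ``locally constant'' in the relevant sense, namely all the internal transition maps $M(j_1 \leq j_2)$ for $j_1 \leq j_2$ in the same chamber $J$ are isomorphisms. Granting this, for each chamber $J$ the limit $\lim_J M$ exists and the canonical maps $\lim_J M \to Mj$ are isomorphisms for every $j \in J$; call this isomorphism $\psi_{J,j}$. The maps $\phi_{J_1 J_2}$ appearing in the statement of $N$ should be defined as the composite $\psi_{J_2, j_2}^{-1} \circ M(j_1 \leq j_2) \circ \psi_{J_1, j_1}$ for a convenient choice of $j_1 \in J_1$, $j_2 \in J_2$ with $j_1 \leq j_2$; the first substantive step is to check this is well-defined, i.e. independent of the choices of $j_1, j_2$. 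This follows from the compatibility of the limit cones with the transition maps of $M$ together with the convexity of chambers (so that one can connect two choices of basepoint through a zigzag lying in $J_1 \cup J_2$ along which everything commutes).

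Next I would verify that $N$ as defined is actually a functor, i.e. that $N(j_2 \leq j_3) \circ N(j_1 \leq j_2) = N(j_1 \leq j_3)$ whenever $j_1 \leq j_2 \leq j_3$. The only nontrivial case is when $j_1, j_2, j_3$ lie in three (possibly equal) chambers $J_1, J_2, J_3$; unwinding the definition of the $\phi$'s via the $\psi$'s, functoriality of $N$ reduces to functoriality of $M$ together with well-definedness established in the previous step. Then I would define the candidate natural transformation $\eta\colon N \Rightarrow M$ by $\eta_j = \psi_{J,j}\colon Nj = \lim_J M \to Mj$ for $j$ in chamber $J$, and check naturality: for $j_1 \leq j_2$ one needs $M(j_1 \leq j_2) \circ \eta_{j_1} = \eta_{j_2} \circ N(j_1 \leq j_2)$, which is exactly the definition of $\phi_{J_1 J_2}$ rearranged. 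Since each $\eta_j$ is an isomorphism, $\eta$ is a natural isomorphism, giving $M \cong N$.

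The step I expect to be the main obstacle is establishing that the transition maps $M(j_1 \leq j_2)$ are isomorphisms within a chamber and, relatedly, that $\lim_J M \to Mj$ is an isomorphism — i.e. extracting exactly what ``subordinate to $M$'' buys us from Definition \ref{def-isotopy-subdivision}. This is where the geometry of the subdivision (convexity of chambers, the isotopy condition relating a chamber to its closure) must be used carefully, and where pointwise finite-dimensionality may be needed to rule out pathological behavior of the limit over an infinite chamber. A secondary technical point is handling basepoint-independence uniformly across all pairs of comparable chambers: since there are only finitely many chambers this is a finite check, but one must ensure the zigzag connecting two basepoints stays inside $J_1 \cup J_2$, which again relies on convexity. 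Once these geometric inputs are in hand, the categorical bookkeeping (functoriality of $N$, naturality of $\eta$) is routine diagram-chasing.
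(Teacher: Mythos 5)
Your overall architecture matches the paper's: the map $\phi_{J_1J_2}$ is defined as $\tau_{b}^{-1}\circ M(a\leq b)\circ\sigma_a$ using the leg maps of the limit cones (this is exactly the paper's Proposition~\ref{prop-sim-2}), the leg maps of the limit over a chamber are isomorphisms (Proposition~\ref{prop-sim-1}), and the collection of leg maps then gives the natural isomorphism $N\cong M$ by unwinding the definition of $\phi_{J_1J_2}$. You also correctly identify that all internal transition maps of a chamber are isomorphisms by fiat, from Definition~\ref{def-isotopy-subdivision}. So far this is the same route as the paper.

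However, your sketch of the basepoint-independence of $\phi_{J_1J_2}$ has a genuine gap, and your description of the mechanism is inaccurate. You propose to ``connect two choices of basepoint through a zigzag lying in $J_1\cup J_2$ along which everything commutes,'' but $J_1\cup J_2$ need not be convex and no such zigzag is used. What is actually required is to show, for $a,a'\in J_1$ and $b,b'\in J_2$ with $a\leq b$ and $a'\leq b'$, that
\[
M(b\rightsquigarrow b')\circ M(a\leq b)\;=\;M(a'\leq b')\circ M(a\rightsquigarrow a'),
\]
where $a\rightsquigarrow a'$ is a zigzag entirely inside $J_1$ and $b\rightsquigarrow b'$ is a zigzag entirely inside $J_2$; the two zigzags live in different chambers and do not concatenate into a single path. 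The paper proves this by a nontrivial double geometric induction on staircase lengths, and the argument at several points uses specifically two-dimensional facts: that $p$ and $q$ must intersect in $\mathbb{R}^2$ when $a\leq b$ and $a'\geq b'$ (Lemma~\ref{lemma:no-inverse}, which you do not mention but which is also needed to ensure $\phi_{J_1J_2}$ and a putative $\phi_{J_2J_1}$ cannot both exist), and that removing the principal up-set and down-set of a point disconnects $\mathbb{R}^2$. This is where the restriction to two parameters is essential --- indeed the paper's Example~\ref{ex-dim3} shows the whole theorem fails in three or more parameters --- and your proposal attributes the difficulty to pointwise finite-dimensionality and ``convexity'' rather than to this 2D geometry. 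So the categorical scaffolding is right, but the one substantive lemma it rests on is not ``routine'' and is not proved the way you describe.
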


The construction of linear transformation $\phi_{J_1J_2}$ is provided in Proposition~\ref{prop-sim-2}.

Because the functor category $\cat{vect}_{\mathbb{F}}^{(\mathbb{R},\leq)^{2}}$ is a Krull-Schmidt category~\cite{botnan2020decomposition}, it follows that to determine the indecomposable direct summands of $M$, it suffices to find the indecomposable direct summands of $N$.

\begin{theorem}\label{thm-sim-1'}
Given $M\in\cat{vect}_{\mathbb{F}}^{(\mathbb{R},\leq)^{2} }$. Assume there exists a finite convex isotopy subdivision of $(\mathbb{R},\leq)^{2}$ subordinate to $M$, and let $(P,\leq)$ be the poset of chambers. Then $M\cong \widetilde{N}\circ \mathcal{F}_M$, where $\widetilde{N}:(P,\leq)\rightarrow \cat{vect}_{\mathbb{F}}$ is defined as follows:
\begin{itemize}
    \item Given $p\in P$, $\widetilde{N}_p=\lim_{J_p}M\cong Mi$ for all $i\in J_p$;
    \item For all $p\leq q\in P$, $$\widetilde{N}(p\leq q)=\begin{cases} \id_{\lim_{J_p}M}, & \mbox{if } p=q \\ \phi_{J_pJ_q} , & \mbox{else } \end{cases}$$
where $\phi_{J_pJ_q}$ is the canonical linear transformation defined in Proposition \ref{prop-sim-2}.
\end{itemize} 
\end{theorem}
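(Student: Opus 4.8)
The plan is to read this off from Theorem~\ref{thm-sim-1} and Proposition~\ref{prop-sim-2}: the functor $N$ built there is constant on each chamber, so it ought to factor through the finite poset $(P,\leq)$ of chambers, and $\widetilde{N}$ is precisely that factorization. First I would pin down $\mathcal{F}_M\colon(\mathbb{R},\leq)^2\to(P,\leq)$ as the map sending a point $i$ to the unique $p\in P$ with $i\in J_p$; this is well defined on objects exactly because every point lies in a single chamber, a fact already used to define $N$ in Theorem~\ref{thm-sim-1}. The one substantive point is that $\mathcal{F}_M$ is a functor, i.e.\ a monotone map: $i\leq i'$ in $(\mathbb{R},\leq)^2$ must force $\mathcal{F}_M(i)\leq\mathcal{F}_M(i')$ in $(P,\leq)$. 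This is how the order on the chambers of a convex isotopy subdivision is arranged, so I would record it as the opening step, tracing it back to Definition~\ref{def-isotopy-subdivision}.

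Next I would check that $\widetilde{N}$ is a genuine functor on $(P,\leq)$. On objects there is nothing to do: $\lim_{J_p}M$ depends only on the chamber $J_p$, and the isomorphism $\lim_{J_p}M\cong Mi$ for every $i\in J_p$ is part of Theorem~\ref{thm-sim-1}, where inside a chamber every structure map realizes the limit. On morphisms I must know that $\phi_{J_pJ_q}$ depends only on the ordered pair $(p,q)$ with $p\leq q$, and that $\phi_{J_qJ_r}\circ\phi_{J_pJ_q}=\phi_{J_pJ_r}$ for $p\leq q\leq r$ with $\phi_{J_pJ_p}=\id$. Canonicity is the content of Proposition~\ref{prop-sim-2}, and functoriality then follows, either directly from Proposition~\ref{prop-sim-2} or by transporting the composition law of $N$: choosing points $i\leq i'\leq i''$ in $J_p,J_q,J_r$ (which should follow from the structure of the subdivision) turns $N(i'\leq i'')\circ N(i\leq i')=N(i\leq i'')$ into the desired identity.

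Granting these, identifying $\widetilde{N}\circ\mathcal{F}_M$ with $N$ is immediate. For an object $i$ in $J_p$ we get $(\widetilde{N}\circ\mathcal{F}_M)(i)=\widetilde{N}_p=\lim_{J_p}M=Ni$. For a morphism $i\leq i'$ with $i\in J_p$ and $i'\in J_q$ (so $p\leq q$ by the first step), we get $(\widetilde{N}\circ\mathcal{F}_M)(i\leq i')=\widetilde{N}(p\leq q)$, which is $\id_{\lim_{J_p}M}$ when $p=q$ and $\phi_{J_pJ_q}$ otherwise --- exactly the value $N(i\leq i')$ prescribed in Theorem~\ref{thm-sim-1}. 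Hence $\widetilde{N}\circ\mathcal{F}_M=N$, and Theorem~\ref{thm-sim-1} gives $M\cong N=\widetilde{N}\circ\mathcal{F}_M$.

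The real obstacle is the first step: verifying that the combinatorial order on the chambers of a convex isotopy subdivision is compatible with the product order on $\mathbb{R}^2$, so that $\mathcal{F}_M$ is monotone, and --- relatedly --- that any comparable pair of chambers contains an honest comparable pair of points, so that the composition law of $N$ descends to $P$. Both facts rest on the convexity of the chambers and the control the ``isotopy'' hypothesis gives over how their closures meet; once they are in hand, everything else is bookkeeping on top of Theorem~\ref{thm-sim-1} and Proposition~\ref{prop-sim-2}.
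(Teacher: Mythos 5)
Your proposal follows the paper's route exactly: observe that $\widetilde{N}\circ\mathcal{F}_M$ is the functor $N$ of Theorem~\ref{thm-sim-1} and invoke that theorem to conclude $M\cong N=\widetilde{N}\circ\mathcal{F}_M$. The ``real obstacle'' you flag --- that $(P,\leq)$ is a poset and $\mathcal{F}_M$ is monotone --- is precisely what the paper disposes of in the two lemmas immediately preceding the theorem, both resting on Lemma~\ref{lemma:no-inverse} (two disjoint convex chambers cannot contain comparable pairs pointing in both directions).
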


Therefore, any finite convex isotopy subdivision of $(\mathbb{R},\leq)^{2}$ subordinate to $M$ is also a constant subdivision of $(\mathbb{R},\leq)^{2}$ subordinate to $M$ and a finite encoding of $M$.

Next, we study thin $2$-parameter persistence modules. It is well-known that interval modules, as objects of $\cat{vect}_{\mathbb{F}}^{(P,\leq)}$, are indecomposable. In particular, when $(P,\leq)=(\mathbb{R},\leq)^2$, polytope modules are indecomposable. However, the converse of this statement does not hold in general. By imposing additional assumptions, the converse may be true up to isomorphism. For example, it is known that every thin indecomposable persistence module on a finite 2-dimensional grid is isomorphic to an interval module~\cite{asashiba2018interval}. In this paper, we extend this result to indecomposable thin persistence modules over $(\mathbb{R},\leq)^2$:

\begin{theorem}\label{chap3.3:thm}
Every indecomposable thin persistence module $M\in\cat{vect}_{\mathbb{F}}^{(\mathbb{R}, \leq)^2}$ is isomorphic to a polytope module.
\end{theorem}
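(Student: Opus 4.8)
The plan is to deduce the general statement from the finite-grid case: by the result cited above \cite{asashiba2018interval}, every thin indecomposable persistence module on a finite (commutative) $2$-dimensional grid is isomorphic to an interval module, and I will transport this to $(\mathbb{R},\leq)^{2}$ by restricting $M$ to suitable finite sub-grids.

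\emph{Set-up and two easy observations.} Let $S=\{\,i\in\mathbb{R}^{2}:Mi\neq 0\,\}$ be the support; since $M$ is thin, $\dim Mi=1$ for $i\in S$. Choosing a basis vector of each $Mi$ ($i\in S$), every map $M(i\leq j)$ between comparable points of $S$ becomes multiplication by a scalar, which is either $0$ or an isomorphism. Let $G$ be the graph with vertex set $S$ in which comparable $i,j$ are joined by an edge exactly when this scalar is nonzero. (i) \emph{$G$ is connected}: otherwise $S=T\sqcup T'$ with no $G$-edge between $T$ and $T'$, and then $M\cong M|_{T}\oplus M|_{T'}$, since each $M|_{T}$ is a submodule (any nonzero structure map out of a point of $T$ lands in $T$), contradicting indecomposability. (ii) \emph{Rectangle lemma}: if $a\leq c$ with $M(a\leq c)\neq 0$, then for every $x$ with $a\leq x\leq c$ the factorization $M(a\leq c)=M(x\leq c)\circ M(a\leq x)$ forces $Mx\neq 0$; applying this to all $a\leq x\leq y\leq c$ shows that the closed box $[a,c]:=\{\,x:a\leq x\leq c\,\}$ lies in $S$ and that every $M(x\leq y)$ with $x\leq y$ in $[a,c]$ is an isomorphism.

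\emph{Restricting to finite grids.} Given finitely many points $x_{0},\dots,x_{n}\in S$, put $\Gamma=A\times B$, where $A,B\subseteq\mathbb{R}$ are the finite sets of first, resp. second, coordinates occurring among the $x_{k}$. Then $\Gamma$ is (order-isomorphic to) a finite commutative grid, and $M|_{\Gamma}$ is a thin, pointwise finite-dimensional persistence module over it; as $\Gamma$ is a finite poset, $M|_{\Gamma}$ is a finite direct sum of indecomposables, each of which is thin and hence, by \cite{asashiba2018interval}, an interval module. Write $M|_{\Gamma}\cong M_{I_{1}}\oplus\dots\oplus M_{I_{r}}$ with $I_{1},\dots,I_{r}\subseteq\Gamma$ intervals. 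By thinness the $I_{k}$ are pairwise disjoint. Every edge of $G$ lying inside $\Gamma$ is a nonzero block-diagonal map, hence nonzero in exactly one block, so both its endpoints lie in one $I_{k}$; together with disjointness of the $I_{k}$ this shows that the connected components of the restricted graph $G|_{\Gamma}$ are precisely the sets $I_{k}$. In particular, a cycle of $G$ contained in $\Gamma$ lies inside a single $M_{I_{k}}$, so the product of the structure scalars and their inverses around it equals $1$.

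\emph{Assembling the isomorphism.} Fix comparable $a\leq c$ in $S$. Since $G$ is connected there is a $G$-path $a=x_{0},x_{1},\dots,x_{n}=c$; choosing $\Gamma$ as above for these vertices, the path survives in $G|_{\Gamma}$, so $a$ and $c$ lie in the same $I_{k}$ and thus $M(a\leq c)\neq 0$. Together with the rectangle lemma this shows $S$ is order-convex, and since $G$ is a spanning subgraph of the comparability graph of $S$, that graph is connected; hence $S$ is an interval of $(\mathbb{R},\leq)^{2}$. Next, every cycle of $G$ is finite, so it is contained in some such $\Gamma$ and therefore has trivial holonomy; fixing $i_{0}\in S$ and transporting a chosen basis vector of $Mi_{0}$ along a spanning tree of $G$ then produces well-defined basis vectors $v_{i}\in Mi$ ($i\in S$) with respect to which every $G$-edge, equivalently (by the previous sentence) \emph{every} structure map between comparable points of $S$, is the identity. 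Sending the generator of $M_{S}$ at $i$ to $v_{i}$ is then a natural isomorphism $M_{S}\xrightarrow{\ \sim\ }M$; as $S$ is an interval (i.e.\ convex in the order-theoretic sense), $M_{S}$ is a polytope module, which proves the theorem.

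\emph{Main obstacle.} Everything past the grid case \cite{asashiba2018interval} is formal; the delicate points are (a) enlarging $\Gamma$ to contain an \emph{entire} $G$-path, or $G$-cycle, rather than only its endpoints, so that $G$-connectedness and holonomy are faithfully detected after restriction, and (b) the pairwise disjointness of the interval summands of $M|_{\Gamma}$ --- a consequence of thinness --- which is what identifies the components of $G|_{\Gamma}$ with interval summands and makes holonomies computed in $M$ agree with those computed in $M|_{\Gamma}$.
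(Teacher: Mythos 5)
Your proof is correct, and it takes a genuinely different route from the paper's. The paper is entirely self-contained: its Lemma~\ref{chap3.3:lemma2} proves directly, by induction on the length of a reduced staircase path, that any zigzag path between comparable points of the support forces the corresponding structure map to be nonzero (the induction step replaces subpaths of shape BBG/GBB/BGG/GGB by shorter paths, using the planar geometry of $\mathbb{R}^2$), after which the isomorphism to $\mathbb{F}P$ is assembled via the path transports $M(a\rightsquigarrow b)$ already set up in Section~3 (Lemma~\ref{prop-lemma-1}). You instead delegate this geometric heavy lifting to the finite-grid theorem of \cite{asashiba2018interval}: restricting $M$ to finite subgrids $\Gamma = A\times B$ generated by the vertices of a given $G$-path or $G$-cycle, the thinness-forced disjointness of the interval summands of $M|_{\Gamma}$ identifies connected components of $G|_{\Gamma}$ with these intervals, which simultaneously yields nonvanishing of $M(a\leq c)$ for $G$-connected comparable points and triviality of cycle holonomies; a spanning-tree gauge then normalizes all structure maps to identities. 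Both proofs rest on the same two pillars (connected support, trivial holonomy on the support) and both are essentially 2-dimensional phenomena — the paper through the staircase reduction, you through the grid theorem — so the trade-off is elementariness and self-containment (paper) versus brevity via a citation (yours). Minor points worth tightening in your write-up: the claim that the holonomy "equals $1$" should be phrased basis-independently (the endomorphism of $Mx_0$ obtained by going around the cycle is $\id$, since it is conjugate to a composite of $\id_{\mathbb{F}}$'s inside the interval summand); and for an infinite support the existence of the spanning tree relies on Zorn's lemma, which is fine but worth a word.
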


Theorem \ref{thm-sim-1'} and Theorem \ref{chap3.3:thm} may fail when $M$ has more than two parameters. See Example \ref{ex-dim3}. 
\section{Preliminaries: Persistence Modules Over Posets}\label{prelim}



Let $(P,\leq)$ be a poset. A subset $\cat{I}$ of $P$ is called \textit{convex} if $i,k\in\cat{I}$ implies $j\in\cat{I}$ for any $i\leq j\leq k$ in $P$, and is called \textit{connected} if for all $a,b\in\cat{I}$, there exists $x_0:=a, x_1,\dots, x_{n-1}, x_n:=b$ in P such that $x_i$ and $x_{i+1}$ are comparable for all $i=0,\dots, n-1$. The set $\cat{I}$ is called an \textit{interval} of $(P,\leq)$ if $\cat{I}$ is connected and convex.

Assume $(P,\leq)=(\mathbb{R},\leq)^2$ and let $p: a- x_1- x_2-\cdots - x_n- b$ be a zigzag path in $J\subseteq \mathbb{R}^2$, where $-$ is $\leq$ or $\geq$. $p$ is called a \textit{staircase} is $p$ consists only of horizontal and vertical arrows in $(\mathbb{R},\leq)^{2}$. Moreover, $p$ is called a \textit{reduced staircase} if it contains no two consecutive horizontal or vertical arrows. When $J \subseteq \mathbb{R}^2$ is convex (as a poset), there exists a reduced staircase for every zigzag path $p$ in $J$. Since $(\mathbb{R},\leq)^2$ is a thin category, we may assume $p$ is a reduced staircase.

Let $(P,\leq)$ be a poset and $\cat{I}\subseteq (P,\leq)$ is an interval. Define the \textit{interval module} $\mathbb{F}\cat{I}$ as 
$$\mathbb{F}\cat{I}i = \begin{cases} \mathbb{F}, & \mbox{if } i\in \cat{I};\\ 0, & \mbox{if } i\notin \cat{I}\end{cases} \mbox{\quad and \quad} \hom(\mathbb{F}\cat{I}i, \mathbb{F}\cat{I}j) = \begin{cases} \{\id_{\mathbb{F}}\}, & \mbox{if } i\leq j\in \cat{I};\\ 0, & \mbox{else} \end{cases} $$
When $(P,\leq)=(\mathbb{R},\leq)^n$ and $n\geq 2$, we call $\mathbb{F}\cat{I}$ the \textit{polytope module}\index{polytope module} over $\cat{I}$.

Let $M\in \cat{vect}_{\mathbb{F}}^{(P,\leq)}$. $M$ is \textit{decomposable}\index{decomposable} if there exists non-trivial subrepresentations $N$ and $N'$ such that $Mi\cong Ni\oplus N'i$ for all $i\in P$. We say $M$ is \textit{indecomposable}\index{indecomposable} if it is not decomposable. $M$ is a \textit{thin representation}\index{thin representation} if $\dim(Mi)=0 \mbox{ or }1$ for all $i\in P$.

The following lemma about interval modules is well-known.

\begin{lemma}\label{thinpoly}
Interval modules are thin and indecomposable.
\end{lemma}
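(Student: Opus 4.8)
The plan is to verify the two claims separately, the first being essentially immediate from the definition. For thinness, observe that by the definition of the interval module $\mathbb{F}\cat{I}$, each $\mathbb{F}\cat{I}i$ is either $\mathbb{F}$ (when $i\in\cat{I}$) or $0$ (when $i\notin\cat{I}$), so $\dim(\mathbb{F}\cat{I}i)\in\{0,1\}$ for all $i\in P$; this is exactly the definition of a thin representation. The substance of the lemma is therefore the indecomposability claim.

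For indecomposability, I would compute the endomorphism ring $\End(\mathbb{F}\cat{I})$ and show it has no nontrivial idempotents. First I would argue that any endomorphism $f\colon \mathbb{F}\cat{I}\to\mathbb{F}\cat{I}$ is determined by a single scalar: on each $i\in\cat{I}$, $f_i\colon\mathbb{F}\to\mathbb{F}$ is multiplication by some $\lambda_i\in\mathbb{F}$ (and $f_i=0$ forced when $i\notin\cat{I}$), and naturality across a comparable pair $i\leq j$ in $\cat{I}$ — where the structure map of $\mathbb{F}\cat{I}$ is $\id_\mathbb{F}$ — forces $\lambda_i=\lambda_j$. Here I would use that $\cat{I}$ is \emph{connected}: any two elements of $\cat{I}$ are joined by a finite zigzag of comparabilities lying in $\cat{I}$ (using convexity of $\cat{I}$ to keep the intermediate elements inside $\cat{I}$), and along each step the scalar is preserved, so all the $\lambda_i$ coincide with a common value $\lambda$. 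Hence $\End(\mathbb{F}\cat{I})\cong\mathbb{F}$ as a ring.

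Since $\mathbb{F}$ is a field, its only idempotents are $0$ and $1$, i.e. $\End(\mathbb{F}\cat{I})$ is local (a field with no nontrivial idempotents). A module is indecomposable precisely when its endomorphism ring has no idempotents other than $0$ and $\id$; a decomposition $\mathbb{F}\cat{I}\cong N\oplus N'$ with $N,N'$ nontrivial would yield the projection onto $N$ as an idempotent endomorphism equal to neither $0$ nor $\id$, contradicting $\End(\mathbb{F}\cat{I})\cong\mathbb{F}$. Therefore $\mathbb{F}\cat{I}$ is indecomposable.

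I do not expect a serious obstacle here; the only point requiring care is the connectedness/convexity argument showing the endomorphism scalar is globally constant on $\cat{I}$ — one must make sure the zigzag witnessing connectedness can be taken inside $\cat{I}$, which is where convexity of the interval is used, since the scalar identity $\lambda_i=\lambda_j$ is only available when the comparability $i\leq j$ actually lies in $\cat{I}$ so that the structure map is the identity rather than zero.
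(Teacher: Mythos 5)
Your proposal is correct and takes essentially the same approach as the paper: compute $\End(\mathbb{F}\cat{I})\cong\mathbb{F}$ by showing that naturality across comparable pairs in $\cat{I}$ together with connectedness forces every endomorphism to act by a single global scalar, then conclude indecomposability from the absence of nontrivial idempotents (you make this last step explicit, where the paper leaves it implicit). One minor remark: connectedness of an interval is by definition witnessed by a zigzag lying inside $\cat{I}$, so you do not need convexity to keep the intermediate points in $\cat{I}$; convexity's actual role is to make $\mathbb{F}\cat{I}$ a well-defined functor in the first place (otherwise some $\mathbb{F}\cat{I}(i\leq k)=\id_{\mathbb{F}}$ would be forced to factor through a zero space $\mathbb{F}\cat{I}j$ with $i\leq j\leq k$).
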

\begin{proof}
Let $M$ be an interval module. The support of $M$ is an interval of $(P,\leq)$. Consider the endomorphism ring of $M$. We want to show $\End(M)\cong\mathbb{F}$. Let $f\in\End(M)$. Note that for every $p\in (P,\leq)$, $f_p: \mathbb{F}\rightarrow \mathbb{F}$ is a linear transformation, hence $f_p(x)=c_f\cdot x$ for some $c_f\in\mathbb{F}$. Let $p'$ be a point in the support of $M$. Note that there exists a zigzag path from $p$ to $p'$ because the support of $M$ is an interval. Since $f$ is a morphism between two representations and $M$ is a polytope module, we must have $f_q(x)=c_f\cdot x$. Define $\Phi:\End(M)\rightarrow\mathbb{F}$ by $\Phi(f)=c_f$ for each $f\in \End(M)$. It is clear that $\Phi$ is bijective, so we only need to show that $\Phi$ is a ring homomorphism. Note that $\Phi(f+g)=c_f+c_g=\Phi(f)+\Phi(g)$ and $\Phi(f\circ g)=c_f\cdot g=\Phi(f)\Phi(g)$, hence $\Phi$ is a ring homomorphism. 
\end{proof}

Let $M\in\cat{vect}_{\mathbb{F}}^{(\mathbb{R},\leq)^{2}}$. A \textit{constant subdivision of $(\mathbb{R},\leq)^{2}$ subordinate to $M$}~\cite{miller2020homological} is a partition of $(\mathbb{R},\leq)^{2}$ into constant regions such that for each constant region $I$ there is a single vector space $M_I$ with $M_I\rightarrow M_i$ for all $i\in I$ that has no monodromy: if J is another constant region, then all comparable pairs $i\leq j$ with $i\in I$ and $j\in J$ induce the same composite homomorphism $M_I\rightarrow M_J$.

Fix a poset $(Q,\leq)$. An \textit{encoding}~\cite{miller2020homological} of a $Q$-module $M$ by a poset $(P,\leq)$ is a poset morphism $\pi: (Q,\leq)\rightarrow (P,\leq)$ together with a $P$-module $H$ such that $M\cong \pi^\ast H$, the pullback of $H$ along $\pi$, which is naturally a $Q$-module. The encoding is \textit{finite} if $(P,\leq)$ is finite (here $\dim(H_p)$ is also finite for all $p\in (P,\leq)$ since $M$ is p.f.d.).

\section{$2$-Parameter p.f.d. Persistence Modules with Convex Isotopy Subdivisions}
\begin{definition}[Isotopy Subdivision]\label{def-isotopy-subdivision}
    Let $M\in \cat{vect}_{\mathbb{F}}^{(\mathbb{R},\leq)^{2}}$. An isotopy subdivision of $(\mathbb{R},\leq)^{2}$ subordinate to $M$ is a partition of $(\mathbb{R},\leq)^{2}$ into connected subsets (called chambers) such that $M(a\leq b)$ is an isomorphism provided that $a\leq b$ are contained in the same chamber. An isotopy subdivision is finite if the number of chambers in the partition is finite. Moreover, we call the partition a convex isotopy subdivision if every chamber in the isotopy subdivision of $(\mathbb{R},\leq)^{2}$ is convex.
\end{definition}

Given $M\in \cat{vect}_{\mathbb{F}}^{(\mathbb{R},\leq)^{2}}$. Assume there exists a finite convex isotopy subdivision of $(\mathbb{R},\leq)^{2}$ subordinate to $M$. In this section, we show that there exists $N\in \cat{vect}_{\mathbb{F}}^{(\mathbb{R},\leq)^{2}}$ where $N(i\leq j)=\id$ for all $i$ and $j$ in a chamber such that $M\cong N$. In particular, Proposition~\ref{prop-sim-2} shows that every finite convex isotopy subdivision of $M\in \cat{vect}_{\mathbb{F}}^{(\mathbb{R},\leq)^{2}}$ is a finite constant subdivision of $(\mathbb{R},\leq)^{2}$ subordinate to $M$.

\begin{definition}\label{def-path1}
    Let $J\subseteq (\mathbb{R},\leq)^{2}$ be a connected poset and $M\in \cat{vect}_{\mathbb{F}}^J$ such that $M(i\leq j)$ is an isomorphism for all $i\leq j\in J$. For each comparable pair $i$ and $j$ in J, define $$\hat{M}(i,j)=\begin{cases} M(i\leq j), & \mbox{if } i\leq j \\ M(j\leq i)^{-1}, & \mbox{if } j\leq i \end{cases}$$ 
    An arrow $i-j$ is called a good arrow $(G)$ if $\hat{M}(i,j)=M(i\leq j)$ and is called a bad arrow $(B)$ if $\hat{M}(i,j)=M(j\leq i)^{-1}$, where $-$ means $\leq$ or $\geq$. Given $a, b\in J$, let $p: a- x_1- x_2-\cdots - x_n- b$ be a zigzag path in $J$, define $$M(a{\overset{p}{\rightsquigarrow}}b)=\hat{M}(x_n,b)\circ \hat{M}(x_{n-1},x_n)\circ\cdots \hat{M}(x_1,x_2)\circ  \hat{M}(a,x_1)$$
\end{definition}

\begin{lemma}\label{prop-lemma-0}
    Let $J\subseteq (\mathbb{R},\leq)^{2}$ be a connected and convex poset and $M\in \cat{vect}_{\mathbb{F}}^J$ such that $M(i\leq j)$ is an isomorphism for all $i\leq j\in J$. Then for any $a\leq b\in J$ and any zigzag path $p:a-x_1-\cdots-x_n-b$, $M(a{\overset{p}{\rightsquigarrow}}b)=M(a\leq b)$.
\end{lemma}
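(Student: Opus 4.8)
The plan is to reduce $p$ to a reduced staircase and then induct on the number $n$ of arrows of $p$, using functoriality of $M$ for all the local moves and convexity of $J$ to keep every new vertex inside $J$. First I would observe that the ``preliminary'' moves do not change $M(a\overset{p}{\rightsquigarrow}b)$: a backtrack $x_{i-1}-x_i-x_{i-1}$ contributes $\hat M(x_i,x_{i-1})\circ\hat M(x_{i-1},x_i)=\id$, and a diagonal arrow $u-v$ (with $u,v$ comparable but differing in both coordinates) can be replaced by the two-arrow staircase through the corner $((u\wedge v)_1,(u\vee v)_2)$, which lies in $J$ by convexity and gives the same composite by functoriality. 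Iterating these (together with merging two consecutive collinear arrows into one) yields a reduced staircase with the same transport, so I may assume $p:a=x_0-x_1-\cdots-x_n=b$ is a reduced staircase. The base cases $n\le 1$ are immediate: $n=0$ forces $a=b$, and for $n=1$ the single arrow is good since $a\le b$, so $M(a\overset{p}{\rightsquigarrow}b)=M(a\le b)$.

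For $n\ge 2$ the inductive step splits according to the directions of the arrows. If the first arrow is bad ($x_0\ge x_1$), then $x_1\le x_0=a\le b$, so the tail $p':x_1-\cdots-x_n$ runs between the comparable pair $x_1\le b$, has $n-1$ arrows, and by the inductive hypothesis $M(x_1\overset{p'}{\rightsquigarrow}b)=M(x_1\le b)$; then $M(a\overset{p}{\rightsquigarrow}b)=M(x_1\le b)\circ M(x_1\le a)^{-1}=M(a\le b)$ by functoriality along $x_1\le a\le b$. Symmetrically, if the last arrow is bad, $a\le b\le x_{n-1}$ and I peel the last arrow instead. If some interior vertex is not a local extremum of $p$, then $p$ contains a monotone run of $\ge 2$ consecutive arrows of the same type (all good or all bad): a monotone run at an end of $p$ can be replaced by a single arrow to its far endpoint, prepended/appended while the interior vertices are dropped, which strictly shortens $p$; and a length-$2$ monotone run in the interior spans a rectangle whose fourth corner lies in $J$ (convexity), across which the ``$L$'' can be flipped (the square commutes by functoriality), producing a collinear ``$V$'' at the next vertex which then collapses, again strictly shortening $p$. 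In every such sub-case the inductive hypothesis applies to the shorter staircase.

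It remains to treat the case where the first and last arrows are both good and every interior vertex of $p$ is a local extremum, i.e. $p$ is a perfect zig-zag $a=x_0\le x_1\ge x_2\le\cdots\le x_n=b$ whose arrows alternate good/bad and horizontal/vertical. I claim no such $p$ with $n\ge 2$ exists. Say the first arrow is horizontal (the other case is symmetric). Since the arrows alternate in orientation and the first is good, the odd-indexed arrows are exactly the good ones and are horizontal, while the even-indexed arrows are bad and vertical; hence along $p$ the second coordinate is non-increasing. As it must pass from $a_2$ to $b_2\ge a_2$, it is constant, forcing every vertical arrow to be trivial --- impossible for $n\ge 2$. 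Thus the induction closes. The hard part will be the bookkeeping in the middle paragraph: verifying that every reduced staircase containing a monotone run of length two can in fact be shortened while remaining a reduced staircase inside $J$ (the flip-then-collapse step must be checked to respect the ``no two consecutive horizontal/vertical arrows'' condition, or the argument rerun after re-reducing). Convexity of $J$ is used precisely to place the flipped corners and subdivision points in $J$, and the hypothesis $a\le b$ is used both for the peeling step and to make the perfect zig-zag case vacuous.
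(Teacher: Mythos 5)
Your proof is correct, and it follows the same overall strategy as the paper's proof: reduce to a reduced staircase, induct on the number of arrows, show that a strictly alternating G/B staircase between comparable endpoints must be degenerate, and use convexity of $J$ together with functoriality of $M$ to perform length-reducing local moves. The case decomposition is genuinely different, though. The paper first disposes of geometrically self-intersecting paths by inserting the crossing point as an extra vertex and invoking the inductive hypothesis on the two loops, and then shortens any non-alternating path by replacing a length-$3$ subpath of one of the forms BBG, GBB, BGG, GGB with a shorter one obtained from a carefully chosen comparable pair $c\le d$. Your decomposition instead peels a bad arrow off either end of $p$ (using $a\le b$ directly via functoriality along $x_1\le a\le b$), and at any interior non-extremal vertex flips the $L$ through the fourth corner of the spanned rectangle. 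This sidesteps the self-intersection case entirely, which is a real streamlining; the price, which you flag yourself, is the bookkeeping after the flip: the flip breaks the alternating horizontal/vertical pattern, so you must collapse the newly collinear pairs on both sides of the inserted corner (possibly cascading) before the inductive hypothesis applies. That bookkeeping should be done explicitly in a final write-up---each collapse only shortens, so the induction still closes---but it is of the same flavor as the paper's verification that its substitute subpath is valid. In both arguments convexity of $J$ is used for exactly the same purpose, namely to guarantee that the auxiliary corners introduced by the local moves lie in $J$.
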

\begin{proof}
  Since $J$ is convex, we may assume $p: a- x_1- x_2-\cdots - x_n- b$ is a reduced staircase. Induction on the length of $p$ (denoted by $l(p)$). When $l(p)=1$, $M(a{\overset{p}{\rightsquigarrow}}b)=M(a\leq b)$ because $J$ is thin.

  When $l(p)=2$, there exists $x_1\in J$ such that $p:a- x_1- b$. If $a\leq x_1\leq b$, then $M(a{\overset{p}{\rightsquigarrow}}b)=M(a\leq b)$ because $J$ is thin. If $a\geq x_1\geq b$, then $a=x_1=b$ and $M(a{\overset{p}{\rightsquigarrow}}b)=\id_{Ma}=M(a\leq b)$. If $a\leq x_1\geq b$, then $M(a\leq x_1)=M(b\leq x_1)\circ M(a\leq b)$. Hence $M(a\leq b)=M(b\leq x_1)^{-1}\circ M(a\leq x_1)=M(a{\overset{p}{\rightsquigarrow}}b)$. Similarly, if $a\geq x_1\leq b$, then $M(x_1\leq b)=M(a\leq b)\circ M(x_1\leq a)$. Hence $M(a\leq b)=M(x_1\leq b)\circ M(x_1\leq a)^{-1}=M(a{\overset{p}{\rightsquigarrow}}b)$.

Assume the statement is true for all zigzag paths of length less or equal to $N$. Let $p: a- x_1- x_2-\cdots - x_{N}- b$ be a zigzag path from $a$ to $b$ of length $N+1$. If $p$ has a self-intersection $c$, add a new vertex $c$ to $p$. Then we obtain a new path $\tilde{p}$. There are 4 possible forms of $\tilde{p}$:
\begin{description}
       \item[(1) ] $a- x_1-\cdots - x_i- c-x_{i+1}- -\cdots - x_j-c-x_{j+1}-\cdots - x_{N}- b$ for some $i<j$;
    \item[(2) ] $a-c- x_1-\cdots - x_j-c-x_{j+1}-\cdots - x_{N}- b$ for some $j$;
    \item[(3) ] $a- x_1-\cdots - x_i- c-x_{i+1}-\cdots - x_{N}-c- b$ for some $i$;
    \item[(4) ] $a-c- x_1-\cdots - x_{N}-c- b$. 
\end{description}

When $p$ is of the form 1, 2, or 3, then the induction hypothesis implies that $M(c-\dots-c)=\id_{Mc}$ and hence $M(a\leq b)=M(a{\overset{p}{\rightsquigarrow}}b)$. When  $p$ is of the form 4, without loss of generality, we may assume $c\leq x_1$. The induction hypothesis implies that $M(c\leq x_1)=M(c-x_N-x_{N-1}-\cdots-x_1)$. Hence $M(c-x_1-\cdots-x_N-c)=\id_{Mc}$. Note that $a-c-b$ is a path in $J$ of length $2$, the induction hypothesis implies that $\hat{M}(c,b)\circ \hat{M}(a,c)=M(a\leq b)$. Therefore, 
\begin{equation}
    \begin{aligned}
       M(a{\overset{p}{\rightsquigarrow}}b)&=\hat{M}(c,b)\circ \hat{M}(x_N,c)\circ \hat{M}(x_{N-1},x_N)\circ \cdots \hat{M}(c,x_1)\circ \hat{M}(a,c)\\
       &=\hat{M}(c,b)\circ \id_{Mc}\circ \hat{M}(a,c)\\
       &=M(a\leq b)
    \end{aligned}
\end{equation}

Now we assume $p: a- x_1- x_2-\cdots - x_{N}- b$ does not have self-intersection. Note that $a$ and $b$ are not comparable if $p$ consists of alternating good (G) and bad (B) arrows. Hence, every arrow of $p$ is good, or $p$ contains a subpath of the form: BBG, GBB, BGG, or GGB. When arrows of $p$ are all good arrows, then $M(a{\overset{p}{\rightsquigarrow}}b)=M(a\leq b)$ because $J$ is thin. Now we assume $p$ contains a subpath $x_{i-1}-x_i-x_{i+1}-x_{i+2}$ of the form BBG. Because $J\subseteq (\mathbb{R},\leq)^{2}$, there exists a bad arrow $c-d$ where (1) $c=x_{i-1}$ and $x_{i+1}< d \leq x_{i+2}$, or (2) $x_i< c \leq x_{i-1}$ and $d=x_{i+2}$. Substitute the subpath $x_{i-1}-x_i-x_{i+1}-x_{i+2}$ by (1) $c-d-x_{i+2}$ or (2) $x_{i-1}-c-d$, we obtain a new path $q$ of length at most $N$. The induction hypothesis implies that $M(a{\overset{q}{\rightsquigarrow}}b)=M(a\leq b)$ and (1) $M(x_{i-1}-x_i-x_{i+1}-x_{i+2})=M(c-d-x_{i+2})$ or (2) $M(x_{i-1}-x_i-x_{i+1}-x_{i+2})=M(x_{i-1}-c-d)$. Therefore, $M(a{\overset{p}{\rightsquigarrow}}b)=M(a\leq b)$. A similar argument shows that the statement is true when $p$ contains a subpath of the form GBB, BGG, or GBB.

By induction, we conclude that $M(a{\overset{p}{\rightsquigarrow}}b)=M(a\leq b)$ for all zigzag path $p$ in $J$.
\end{proof} 

\begin{lemma}\label{prop-lemma-1}
    Let $J\subseteq (\mathbb{R},\leq)^{2}$ be a connected and convex poset and $M\in \cat{vect}_{\mathbb{F}}^J$ such that $M(i\leq j)$ is an isomorphism for all $i\leq j\in J$. Then for any $a,b\in J$, $M(a{\overset{p}{\rightsquigarrow}}b)$ does not depend on the choice of zigzag path $p$ connecting $a$ and $b$ in $J$.
\end{lemma}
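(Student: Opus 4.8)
The plan is to reduce the statement to Lemma \ref{prop-lemma-0} by converting two zigzag paths with the same endpoints into a single closed loop. Given $a,b\in J$ and zigzag paths $p:a-x_1-\cdots-x_n-b$ and $q:a-y_1-\cdots-y_m-b$ in $J$, I would first observe that it suffices to prove $M(a\overset{q}{\rightsquigarrow}b)^{-1}\circ M(a\overset{p}{\rightsquigarrow}b)=\id_{Ma}$, and that the left-hand side is exactly $M(a\overset{\ell}{\rightsquigarrow}a)$ for the loop $\ell$ obtained by following $p$ from $a$ to $b$ and then $q$ backwards from $b$ to $a$.

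To make this precise I would record two routine bookkeeping facts about the construction in Definition \ref{def-path1}. First, reversing a zigzag path inverts the associated map: since each $\hat{M}(i,j)$ is invertible with $\hat{M}(j,i)=\hat{M}(i,j)^{-1}$ (immediate from the definition of $\hat{M}$, using that $M(i\leq j)$ is an isomorphism), the path $\bar{q}:b-y_m-\cdots-y_1-a$ satisfies $M(b\overset{\bar{q}}{\rightsquigarrow}a)=M(a\overset{q}{\rightsquigarrow}b)^{-1}$. Second, the construction is multiplicative under concatenation at the shared vertex $b$: for the loop $\ell:a-x_1-\cdots-x_n-b-y_m-\cdots-y_1-a$ one has
$$M(a\overset{\ell}{\rightsquigarrow}a)=M(b\overset{\bar{q}}{\rightsquigarrow}a)\circ M(a\overset{p}{\rightsquigarrow}b)=M(a\overset{q}{\rightsquigarrow}b)^{-1}\circ M(a\overset{p}{\rightsquigarrow}b),$$
both of which follow directly by unwinding the composites in Definition \ref{def-path1}. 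Note that $\ell$ is a genuine zigzag path lying in $J$, since $p$ and $q$ do.

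Finally I would apply Lemma \ref{prop-lemma-0} to the trivially comparable pair $a\leq a$ and the zigzag path $\ell$ from $a$ to $a$, which gives $M(a\overset{\ell}{\rightsquigarrow}a)=M(a\leq a)=\id_{Ma}$. Combining this with the displayed identity yields $M(a\overset{p}{\rightsquigarrow}b)=M(a\overset{q}{\rightsquigarrow}b)$, as desired. I do not expect a genuine obstacle here: the substantive content (path-independence, including along loops) is already packaged in Lemma \ref{prop-lemma-0}, and the only care needed is the two elementary compatibility statements for $M(-\overset{-}{\rightsquigarrow}-)$ under reversal and concatenation.
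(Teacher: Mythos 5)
Your proof is correct and is essentially the same as the paper's: both form the loop $p\ast q^{-1}$ (concatenating $p$ with the reversal of $q$), observe that $M(a\overset{p\ast q^{-1}}{\rightsquigarrow}a)=M(a\overset{q}{\rightsquigarrow}b)^{-1}\circ M(a\overset{p}{\rightsquigarrow}b)$ by the reversal/concatenation bookkeeping for $\hat{M}$, and then apply Lemma~\ref{prop-lemma-0} to the pair $a\leq a$ to conclude the loop evaluates to $\id_{Ma}$.
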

  \begin{proof}
  For all $a,b\in J$, let $p: a- x_1- x_2-\cdots - x_n- b$ and $q:a- y_1- y_2-\cdots - y_m- b$ be two zigzag paths from $a$ to $b$. The concatenation of $p$ and the inverse\footnote{The inverse of a zigzag path $q$, denoted by $q^{-1}$, is a zigzag path obtained by reversing the direction of each edge in $q$.} of $q$, denoted by $p\ast q^{-1}$, is the zigzag path $$p\ast q^{-1}: a- x_1- x_2-\cdots - x_n- b -y_m -\cdots -y_2 - y_1-a$$
  Moreover, $M(a{\overset{\hspace{5pt}p\ast q^{-1}}{\rightsquigarrow}}a) =M(a{\overset{q}{\rightsquigarrow}}b)^{-1}\circ M(a{\overset{p}{\rightsquigarrow}}b)$ because $$M(a{\overset{q}{\rightsquigarrow}}b)^{-1}=\hat{M}(a,y_1)^{-1}\circ \hat{M}(y_1,y_2)^{-1}\circ\cdots\circ \hat{M}(y_m,b)^{-1}$$
  Lemma~\ref{prop-lemma-0} implies that $M(a{\overset{q}{\rightsquigarrow}}b)^{-1}\circ M(a{\overset{p}{\rightsquigarrow}}b)=\id_{Ma}$. Therefore, $M(a{\overset{p}{\rightsquigarrow}}b)=M(a{\overset{q}{\rightsquigarrow}}b)$.
  \end{proof}

Consequently, when the assumptions in Lemma~\ref{prop-lemma-1} are satisfied, we drop $p$ from the notation $M(a{\overset{p}{\rightsquigarrow}}b)$ in Definition~\ref{def-path1}.

\begin{proposition}\label{prop-sim-1}
    Let $J\subseteq (\mathbb{R},\leq)^{2}$ be a connected and convex poset and $M\in \cat{vect}_{\mathbb{F}}^J$ such that $M(i\leq j)$ is an isomorphism for all $i\leq j\in J$. Then $\lim_{J}M\cong Ma$ for all $a\in J$.
\end{proposition}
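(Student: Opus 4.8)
The plan is to show that $Ma$, together with the family of maps $\{M(a\rightsquigarrow b): Ma \to Mb\}_{b\in J}$, is a limit cone over the diagram $M: J \to \cat{vect}_{\mathbb{F}}$, so that by the universal property $\lim_J M \cong Ma$. First I would verify that this family is a cone: given any comparable pair $b_1 \leq b_2$ in $J$, I need $M(b_1\leq b_2)\circ M(a\rightsquigarrow b_1) = M(a\rightsquigarrow b_2)$. This follows from Lemma~\ref{prop-lemma-1} (path-independence): the path $a\rightsquigarrow b_1$ followed by the good arrow $b_1 - b_2$ is a zigzag path from $a$ to $b_2$, and $M$ applied to it is $M(b_1\leq b_2)\circ M(a\rightsquigarrow b_1)$; since $M(a\rightsquigarrow b_2)$ is computed along any path, the two agree. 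Note that $M(a\rightsquigarrow b)$ is defined for \emph{every} $b\in J$ (not just comparable ones) precisely because $J$ is connected, so zigzag paths from $a$ to any $b$ exist, and Lemma~\ref{prop-lemma-1} makes the notation unambiguous.

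Next I would check the universal property. Suppose $(V, \{\psi_b: V \to Mb\}_{b\in J})$ is any cone over $M$. I claim there is a unique map $u: V \to Ma$ with $M(a\rightsquigarrow b)\circ u = \psi_b$ for all $b$. For uniqueness: taking $b = a$ forces $u = M(a\rightsquigarrow a)^{-1}\circ\psi_a = \psi_a$ (since $M(a\rightsquigarrow a) = \id_{Ma}$ by path-independence with the trivial path). So we must set $u := \psi_a$, and it remains to verify $M(a\rightsquigarrow b)\circ \psi_a = \psi_b$ for all $b\in J$. This I would prove by induction on the length of a zigzag path $a - x_1 - \cdots - x_n - b$ from $a$ to $b$, using at each step the cone condition on the single comparable pair at the end: if $x_n \leq b$ then $\psi_b = M(x_n\leq b)\circ\psi_{x_n}$ and $\hat M(x_n,b) = M(x_n\leq b)$; if $b\leq x_n$ then $\psi_{x_n} = M(b\leq x_n)\circ\psi_b$, so $\psi_b = M(b\leq x_n)^{-1}\circ\psi_{x_n} = \hat M(x_n,b)\circ\psi_{x_n}$. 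Either way the inductive step goes through, giving $\psi_b = M(a\rightsquigarrow b)\circ\psi_a$.

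I don't anticipate a serious obstacle here — the heavy lifting has already been done in Lemma~\ref{prop-lemma-0} and Lemma~\ref{prop-lemma-1}, which guarantee the maps $M(a\rightsquigarrow b)$ are well-defined and compose correctly. The one point requiring a little care is making sure the cone condition, stated only for comparable pairs, propagates along arbitrary zigzag paths; that is exactly the induction in the previous paragraph, and it is genuinely routine once the setup is in place. An alternative, essentially equivalent, route would be to exhibit explicit mutually inverse maps between $\lim_J M$ and $Ma$: the projection $\pi_a: \lim_J M \to Ma$ of the limit cone, and the map $Ma \to \lim_J M$ induced by the cone $\{M(a\rightsquigarrow b)\}_b$; one then checks both composites are identities using path-independence. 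I would likely present the universal-property argument as the main line and perhaps remark on the explicit-inverse description, since the latter is what gets used in Proposition~\ref{prop-sim-2} to define $\phi_{J_1 J_2}$.
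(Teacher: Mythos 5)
Your proposal is correct and takes essentially the same approach as the paper: both exhibit $Ma$ as the apex of a cone via the maps $M(a\rightsquigarrow b)$, invoke Lemma~\ref{prop-lemma-1} for well-definedness and the cone condition, and verify the universal property by forcing the mediating map to be the $a$-component of the given cone and then propagating the factoring along zigzag paths. You spell out the inductive step of the propagation a bit more explicitly than the paper (which compresses it into a chain of equalities with a vertical ellipsis), but the argument is the same.
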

\begin{proof}
    Given $a\in J$, let $Ma: J\rightarrow \cat{vect}_{\mathbb{F}}$ be a constant functor where $Ma(i)=Ma$ for all $i\in J$ and $Ma(i\leq j)=\id_{Ma}$ for all $i\leq j\in J$. Define $\sigma: Ma\Rightarrow M$, where $\sigma_i:=M(a\rightsquigarrow i)$ for all $i\in J$. (In particular, $\sigma_a=\id_{Ma}$.) Lemma~\ref{prop-lemma-1} implies that $\sigma_i$ is well-defined. For any $i\leq j\in J$, by Lemma~\ref{prop-lemma-1}, $M(i\leq j) \circ M(a\rightsquigarrow i)=M(a\rightsquigarrow j)$. Hence $\sigma_j=M(i\leq j)\circ\sigma_i$ for all $i\leq j\in J$. Therefore, $\sigma:Ma\Rightarrow M$ is a cone.

    Assume there is a cone $\tau: N\Rightarrow M$. Define $\phi: N\rightarrow Ma$ by $\phi:=\tau_a$. Given $i\in J$, let $a- x_1 - \cdots - x_n - i$ be a zigzag path in $J$. Note that 
    \begin{equation}
        \begin{aligned}
          \sigma_i\circ \phi&=M(a\rightsquigarrow i)\circ\tau_a\\&=\hat{M}(x_n, i)\circ \hat{M}(x_{n-1}, x_n)\circ\cdots\circ\hat{M}(x_1, x_2)\circ\hat{M}(a, x_1)\circ\tau_a\\
          &=\hat{M}(x_n, i)\circ \hat{M}(x_{n-1}, x_n)\circ\cdots\circ\hat{M}(x_1, x_2)\circ\tau_{x_1}\\
          &\quad\vdots\\
          &=\tau_i
        \end{aligned}
    \end{equation}
    
    Therefore, $\phi$ is a morphism from the cone $\tau$ to the cone $\sigma$.
    
    If there exists another morphism $\psi: N\rightarrow Ma$ such that $\tau_i=\sigma_i\circ\psi$ for all $i\in J$, then $\phi:\tau_a=\sigma_a\circ\psi=\id_{Ma}\circ\psi=\psi$. 

    In conclusion, $\sigma$ is a limit cone and $\lim_{J}M\cong Ma$ for all $a\in J$.
\end{proof}

Next, we construct the morphisms between limits.

\begin{lemma}\label{lemma:no-inverse}
Let $a, a'\in J_1$ and $b, b'\in J_2$ such that $a\leq b$ and $a'\geq b'$, where $J_1$ and $J_2$ are convex and connected subsets of $(\mathbb{R},\leq)^{2}$. Then $J_1\cap J_2\neq\emptyset$.
\end{lemma}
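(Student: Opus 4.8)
The plan is to argue geometrically, exploiting that a convex connected subset of $(\mathbb{R},\leq)^{2}$ is ``filled in'' between comparable points and is spanned by monotone staircases.

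\emph{Easy cases.} If $a'\leq a$, then $b'\leq a'\leq a\leq b$, so the box $[a',a]:=\{x:a'\leq x\leq a\}$ lies in $J_1$ (convexity of $J_1$) and inside $[b',b]\subseteq J_2$ (convexity of $J_2$), whence $\emptyset\neq[a',a]\subseteq J_1\cap J_2$; symmetrically, if $b\leq b'$ then $[b,b']\subseteq[a,a']\cap J_2\subseteq J_1\cap J_2$. So I may assume $a\not\geq a'$ and $b\not\leq b'$.

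\emph{Reduction to boxes and staircases.} The crux is a separate lemma: if $I$ is an interval of $(\mathbb{R},\leq)^{2}$ and $p,q\in I$ with $p_1\leq q_1$, $p_2\geq q_2$, then $I$ contains a monotone ``right--down'' path from $p$ to $q$, i.e.\ a continuous path whose first coordinate is nondecreasing and second coordinate nonincreasing. To prove it one replaces $I$ by the convex hull of a reduced staircase from $p$ to $q$ in $I$ (a finite union of closed boxes, still inside $I$), considers the vertical slices $S_x=\{y:(x,y)\in I\}$ for $x\in[p_1,q_1]$ --- each a nonempty closed interval, by connectedness and convexity --- and verifies the key inequality $\inf S_{x_1}\leq\sup S_{x_2}$ whenever $x_2\leq x_1$ (a two--point box argument using $(x_2,y_1)$). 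Then $\gamma(x):=\max\!\big(q_2,\min(p_2,\,\sup_{x'\in[x,q_1]}\inf S_{x'})\big)$ is nonincreasing with $(x,\gamma(x))\in I$ and $\gamma(q_1)=q_2$, and short vertical segments at $x=p_1$ and wherever $\gamma$ jumps make the resulting staircase continuous and anchored at $p$ and $q$. I expect this lemma --- with its point--set bookkeeping (nonemptiness and closedness of slices, jumps, reduction to a compact set) --- to be the main obstacle. Granting it, I may replace $J_1$ by the box $[a,a']$ if $a\leq a'$ and otherwise by a monotone staircase $\sigma$ from $a$ to $a'$, likewise for $J_2$; and by applying the coordinate--swapping order automorphism I may assume, whenever $a$ and $a'$ are incomparable, that $a$ lies up--left of $a'$ (i.e.\ $a_1<a'_1$, $a_2>a'_2$).

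\emph{Conclusion via the intermediate value theorem.} Suppose first $a\leq a'$, so $[a,a']\subseteq J_1$. If $b\leq a'$ or $a\leq b'$, then $b$ or $b'$ already lies in $[a,a']\cap J_2$; if $b'\leq b$, then $a\vee b'$ lies in $[a,a']\cap[b',b]\subseteq J_1\cap J_2$; otherwise $b,b'$ are incomparable, and after the coordinate swap $b$ is up--left of $b'$. Now $a\leq b$ and $b'\leq a'$ force the $x$--range $[b_1,b'_1]$ of the staircase $\tau$ from $b$ to $b'$ into $[a_1,a'_1]$, while its second coordinate runs from $b_2>a_2$ down to $b'_2<a_2$; hence $\tau$ attains height $a_2$ at a point of $[a,a']$, which therefore lies in $J_1\cap J_2$. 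Next suppose $a\parallel a'$, so $J_1=\sigma$ is a monotone staircase. If $b'\leq b$, then $[b',b]\subseteq J_2$, and either $b'_1<a_1$, which forces $b'\leq a$ and hence $a\in[b',b]$, or $b'_1\in[a_1,a'_1]$, in which case the points of $\sigma$ with first coordinate $b'_1$ all have second coordinate in $[a'_2,a_2]\subseteq[b'_2,b_2]$, giving a point of $\sigma\cap[b',b]$. Finally, if $b\parallel b'$, a one--line coordinate chain rules out $b$ being down--right of $b'$ (else $a_2\leq b_2<b'_2\leq a'_2<a_2$), so $b$ is up--left of $b'$; then the common $x$--range of $\sigma$ and the staircase $\tau\subseteq J_2$ from $b$ to $b'$ is exactly $[b_1,b'_1]$, where the height of $\sigma$ is $\leq b_2$ (the height of $\tau$ at $x=b_1$) and $\geq b'_2$ (the height of $\tau$ at $x=b'_1$); by the intermediate value theorem $\sigma$ and $\tau$ meet, giving a point of $J_1\cap J_2$. (When a staircase has vertical segments one runs these last arguments with its lower and upper ``height'' functions, which are lower-- and upper--semicontinuous respectively, using connectedness of $[b_1,b'_1]$ in place of the IVT.)
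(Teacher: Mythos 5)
Your approach is genuinely different from the paper's. The paper proves the lemma by the same combinatorial induction it uses throughout Section~3: it picks zigzag (reduced staircase) paths $p\subseteq J_1$ from $a$ to $a'$ and $q\subseteq J_2$ from $b$ to $b'$, inducts on $l(p)+l(q)$, and in the inductive step either shortens a path by collapsing an all-good/all-bad run or by the BBG/GBB/BGG/GGB substitution from the proof of Lemma~\ref{prop-lemma-0}, or, in the remaining ``alternating'' case, observes that the two staircases must cross in $\mathbb{R}^2$, giving a common point. So the paper's proof reuses the already-built zigzag machinery and stays purely order-combinatorial, while yours replaces the induction by a geometric intermediate-value argument on monotone staircases. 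Both are natural; the paper's fits more tightly with Lemmas~\ref{prop-lemma-0} and the subsequent results, whereas yours is more visual and self-contained. Your case analysis in the ``conclusion'' part is essentially correct (I checked the coordinate chains; e.g.\ under $a\leq a'$ and $b\parallel b'$ with $b$ up-left of $b'$, the chain $a_1\leq b_1<b_1'$ forces $a_2>b_2'$ from $a\not\leq b'$, and $b_1<b_1'\leq a_1'$ forces $b_2>a_2'\geq a_2$ from $b\not\leq a'$, so your claims $b_2>a_2>b_2'$ do hold), and the IVT crossing works once one uses the lower/upper height envelopes, as you note.

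However, there is a genuine gap, and you flag it yourself: the ``separate lemma'' that every poset-convex, poset-connected $I\subseteq(\mathbb{R},\leq)^2$ contains a monotone right--down path between any $p$ up-left of $q$ is not proved, and your sketch of it is not yet correct. Slices $S_x$ of $I$ are intervals (by order-convexity) but need not be closed ($I$ may be open), and slices of the ``finite union of closed boxes'' you substitute in need not be connected, so ``nonempty closed interval'' is not justified as stated; the $\sup/\inf$ bookkeeping and semicontinuity of $\gamma$ then need real care (this is exactly why the paper stays at the level of finite staircases and never invokes point-set topology). I do believe the lemma is true for $(\mathbb{R},\leq)^2$ and that your $\gamma$-construction can be repaired (e.g., work with $f(x)=\sup\{y\in[q_2,p_2]:(x,y)\in I\}$ and $g(x)=\inf\{\cdots\}$ over $x\in[p_1,q_1]$, prove $[p_1,q_1]$ is in the $x$-projection of $I$ using a zigzag from $p$ to $q$, and then run a monotone-envelope argument), but as written this is the missing step, and it is not a small one relative to the rest of the proof.
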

\begin{proof}
Since $J_1$ and $J_2$ are convex, the statement is true when $a=a'$ or $b=b'$. Now we assume $a\neq a'$ and  $b\neq b'$. Let $p: a-x_1-\cdots- x_n-a'$ be a zigzag path in $J_1$ and $q: b-y_1-\cdots- y_m-b'$ be a zigzag path in $J_2$. Since $J_1$ and $J_2$ are convex, we may assume $p$ and $q$ are staircases with no two consecutive horizontal or vertical arrows. Induction on $l(p)+l(q)$.

When $l(p)+l(q)=2$, there are 4 cases: (1) $a\leq a'$ and $b\leq b'$; (2) $a\geq a'$ and $b\leq b'$; (3) $a\leq a'$ and $b\geq b'$; (4) $a\geq a'$ and $b\geq b'$. Since $J_1$ and $J_2$ are convex, the statement is true for cases (1), (2), and (4). For case (3), note that $a,a',b,b'\in \mathbb{R}^2$, there exists $c\in \mathbb{R}^2$ such that $a\leq c\leq a'$ and $b'\leq c\leq b$. The convexity of  $J_1$ and $J_2$ implies that $c\in J_1\cap J_2$.


Assume the statement is true when the sum of the lengths of two zigzag paths $a{\overset{}{\rightsquigarrow}}a'$ and $b{\overset{}{\rightsquigarrow}}b'$ is less than or equal to $N$. When $l(p)+l(q)\leq N+1$, there are $3$ cases: (1) the arrows in $p$ are all good (or all bad, respectively), or the arrows in $q$ are all good (or all bad, respectively); (2) $p$ or $q$ contains a subpath of the form BBG, GBB, BGG, or GGB; (3) $p$ and $q$ consist of alternating arrows. For case (1), without loss of generality, we may assume all arrows of $p$ are good. We may replace $p$ with path $p': a\rightarrow a'$. Note that $l(p')+l(q)\leq N$, by induction hypothesis, $J_1\cap J_2\neq \emptyset$. Case (2) can be proved using similar arguments as in Lemma~\ref{prop-lemma-0} because we can replace those subpaths with a new subpath of length at most $2$. For case (3), note that $a\leq b$ and $a'\geq b'$, $p$ and $q$ intersects in $\mathbb{R}^2$. Therefore, the intersection of $p$ and $q$ is in $J_1$ and $J_2$, which implies $J_1\cap J_2\neq \emptyset$.
\end{proof}

\begin{definition}\label{def-path2}
 Let $J_1, J_2\subseteq (\mathbb{R},\leq)^{2}$ be connected and convex posets such that $J_1\cap J_2=\emptyset$. Given $M\in \cat{vect}_{\mathbb{F}}^J$ such that $M(i\leq j)$ is an isomorphism for all $i\leq j\in J_k, k=1,2$, assume there exists $a_0\in J_1$ and $b_0\in J_2$ such that $a_0\leq b_0$. Then for any $a\in J_1$ and $b\in J_2$, define $$M(a{\overset{}{\rightsquigarrow}}b)=M(b{\overset{}{\rightsquigarrow}}b_0)^{-1} \circ M(a_0\leq b_0) \circ M(a{\overset{}{\rightsquigarrow}}a_0)$$ 
 where $a{\overset{}{\rightsquigarrow}}a_0$ is a zigzag path in $J_1$ and $b{\overset{}{\rightsquigarrow}}b_0$ is a zigzag path in $J_2$.  
\end{definition}

\begin{proposition}\label{prop-sim-1.5}
$M(a{\overset{}{\rightsquigarrow}}b)$ is well-defined. That is, $M(a{\overset{}{\rightsquigarrow}}b)$ does not depend on the choice of $a{\overset{}{\rightsquigarrow}}a_0$ and $b{\overset{}{\rightsquigarrow}}b_0$. Moreover, if there exists $a_0'\in J_1$ and $b_0'\in J_2$ such that $a_0'\leq b_0'$, then $M(b{\overset{}{\rightsquigarrow}}b_0)^{-1} \circ M(a_0\leq b_0) \circ M(a{\overset{}{\rightsquigarrow}}a_0)=M(b{\overset{}{\rightsquigarrow}}b'_0)^{-1} \circ M(a'_0\leq b'_0) \circ M(a{\overset{}{\rightsquigarrow}}a'_0)$.
\end{proposition}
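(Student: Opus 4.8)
The first assertion follows at once from Lemma~\ref{prop-lemma-1}: applied inside the convex connected set $J_1$ it shows that $M(a\rightsquigarrow a_0)$ is independent of the zigzag path chosen from $a$ to $a_0$, and applied inside $J_2$ it shows the same for $M(b\rightsquigarrow b_0)$; hence $M(a\rightsquigarrow b)$ depends only on the pair $(a_0,b_0)$. For the remaining assertion, note that by Lemma~\ref{prop-lemma-1} the canonical transports within a chamber compose, so $M(a\rightsquigarrow a_0)=M(a_0'\rightsquigarrow a_0)\circ M(a\rightsquigarrow a_0')$ inside $J_1$ and $M(b\rightsquigarrow b_0')=M(b_0\rightsquigarrow b_0')\circ M(b\rightsquigarrow b_0)$ inside $J_2$ (and the inverse of a chamber transport is the reverse transport). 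Substituting these into the displayed identity and cancelling the isomorphisms $M(a\rightsquigarrow a_0')$ on the right and $M(b\rightsquigarrow b_0)^{-1}$ on the left, the claim reduces to the single identity
$$ (\star)\qquad M(a_0\le b_0)\circ M(a_0'\rightsquigarrow a_0)\;=\;M(b_0'\rightsquigarrow b_0)\circ M(a_0'\le b_0') $$
of linear maps $Ma_0'\rightarrow Mb_0$. Equivalently, $(\star)$ asserts that transporting along the zigzag that runs from $a_0'$ to $a_0$ inside $J_1$, then across the arrow $a_0\le b_0$, then from $b_0$ to $b_0'$ inside $J_2$, reproduces the structure map $M(a_0'\le b_0')$ --- a cross-chamber analogue of Lemma~\ref{prop-lemma-0}.

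I would prove $(\star)$ by induction on $l(p)+l(q)$, where $p$ is a reduced staircase joining $a_0'$ to $a_0$ in $J_1$ and $q$ a reduced staircase joining $b_0$ to $b_0'$ in $J_2$; both may be chosen freely by Lemma~\ref{prop-lemma-1}. The base case $a_0=a_0'$, $b_0=b_0'$ is trivial. In the inductive step one peels the arrow of $p$ or $q$ incident to one of the four points $a_0,a_0',b_0,b_0'$: if $w$ denotes the intermediate vertex so removed, then as soon as $w$ is still comparable with the anchor lying in the opposite chamber (for instance $w\le b_0'$ when the arrow of $p$ at $a_0'$ is removed, or $w\le b_0$ when the arrow of $p$ at $a_0$ is removed), the cross-arrow can be absorbed past $w$ using only associativity of composition in the functor $M\colon(\mathbb{R},\le)^2\rightarrow\cat{vect}_{\mathbb{F}}$, producing a strictly shorter zigzag to which the induction hypothesis applies. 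Because $p$ may be chosen inside $J_1$ and $q$ inside $J_2$, this reduction succeeds whenever $J_1\cap\{x:x\le b_0\}$, $J_1\cap\{x:x\le b_0'\}$, $J_2\cap\{y:y\ge a_0\}$, $J_2\cap\{y:y\ge a_0'\}$ are connected and contain the relevant endpoints --- a statement about order-convex connected subsets of $(\mathbb{R},\le)^2$ which I would prove by a reduced-staircase induction of the Lemma~\ref{lemma:no-inverse} type. The argument therefore settles $(\star)$ except in the extremal configuration where $a_0$ and $a_0'$ are both minimal in $J_1$ and $b_0$ and $b_0'$ are both maximal in $J_2$, in which no favourable arrow to peel is available.

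In that extremal case I would pass through an auxiliary bridge. Using Lemma~\ref{lemma:no-inverse} together with $J_1\cap J_2=\emptyset$ to exclude reversed comparabilities, and order-convexity and connectedness of the chambers, one locates $e\in J_1$ above both $a_0,a_0'$ and $f\in J_2$ below both $b_0,b_0'$ with $e\le f$ --- if necessary replacing this single bridge by a short chain of bridges of the same kind --- through which both given bridges factor as $a_0\le e\le f\le b_0$ and $a_0'\le e\le f\le b_0'$. Transporting $p$ along $a_0'\le e\ge a_0$ and $q$ along $b_0'\ge f\le b_0$ (legitimate by Lemma~\ref{prop-lemma-1}), the two sides of $(\star)$ become, after using $M(a_0\le b_0)=M(f\le b_0)\circ M(e\le f)\circ M(a_0\le e)$ and $M(a_0'\le b_0')=M(f\le b_0')\circ M(e\le f)\circ M(a_0'\le e)$, both equal to $M(f\le b_0)\circ M(e\le f)\circ M(a_0'\le e)$ after the identifications, which gives $(\star)$. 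The very last equality in the statement follows from the same computation.

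The heart of the proof is thus not the homological algebra --- which amounts to functoriality of $M$ and bookkeeping with canonical transports --- but the two planar lemmas invoked above: that an order-convex connected subset of $(\mathbb{R},\le)^2$ remains connected after intersecting with a principal up-set or down-set, and that in the extremal configuration a suitable common bound inside a chamber (or a short chain of such) exists. I expect these to be the main obstacle, and would isolate and establish them by the same reduced-staircase inductions already employed in Lemmas~\ref{prop-lemma-0} and~\ref{lemma:no-inverse}.
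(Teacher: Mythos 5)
Your reduction of the second assertion to the single identity
$$(\star)\qquad M(a_0\le b_0)\circ M(a_0'\rightsquigarrow a_0)\;=\;M(b_0'\rightsquigarrow b_0)\circ M(a_0'\le b_0')$$
is exactly right and mirrors the paper's structure: the paper isolates this as the (unlabeled) lemma immediately preceding the proposition, with statement $M(b\rightsquigarrow b')\circ M(a\le b)=M(a'\le b')\circ M(a\rightsquigarrow a')$, and its short proof of Proposition~\ref{prop-sim-1.5} is precisely the two-line bookkeeping with Lemma~\ref{prop-lemma-1} that you carry out. So the framing is correct.

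Where the two accounts genuinely diverge is in the proof of $(\star)$, and here your sketch has a gap. The paper proves the unlabeled lemma by induction on $l(p)$ alone (not on $l(p)+l(q)$), with Lemma~\ref{prop-lemma-2} as the engine for the base cases and with the alternating case resolved by locating an interior vertex $x_i$ of $p$ that is $\le$-comparable to $b$, $b'$, or some $y_j$ on $q$, then splitting $p$ at $x_i$. Your ``peel an arrow at one of the four anchors'' step works whenever a favourable first or last arrow exists, but in the configuration you call extremal you fall back on constructing a common upper bound $e\in J_1$ of $a_0,a_0'$ and a common lower bound $f\in J_2$ of $b_0,b_0'$ with $e\le f$. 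Order-convexity and connectedness of a chamber do \emph{not} guarantee the existence of such bounds inside that chamber: take $J_1=(\{0\}\times[0,1])\cup([0,1]\times\{0\})$, which is connected and order-convex, and $a_0=(0,1)$, $a_0'=(1,0)$; their join $(1,1)$ lies outside $J_1$, and no point of $J_1$ dominates both. The hedge ``replacing this single bridge by a short chain of bridges of the same kind'' is where the real work would have to happen, and as written it is unsupported; it is not a small omission, because it is exactly the content that the paper's induction (comparing $x_i$ against the intermediate vertices of $q$, not only against the endpoints) is designed to supply. Until that chain-of-bridges step is made precise and shown to terminate, your proof of $(\star)$ is incomplete. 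The auxiliary connectedness lemma you flag (order-convex connected subsets of $(\mathbb{R},\le)^2$ remain connected after intersecting with a principal up- or down-set) is plausible and worth isolating, but it does not by itself rescue the extremal case above.
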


We need the following lemmas to prove Proposition~\ref{prop-sim-1.5}.

\begin{lemma}\label{prop-lemma-2}
    Let $J\subseteq (\mathbb{R},\leq)^{2}$ be a connected and convex poset and $M\in \cat{vect}_{\mathbb{F}}^J$ such that $M(i\leq j)$ is an isomorphism for all $i\leq j\in J$. Then for any $a \in (\mathbb{R},\leq)^{2}$ and $b,c\in J$ such that $a\leq b$ and $a\leq c$, then $M(b{\overset{}{\rightsquigarrow}}c)\circ M(a\leq b)=M(a\leq c)$, where $b{\overset{}{\rightsquigarrow}}c$ is any zigzag path in $J$ from $b$ to $c$.

    Dually, for any $a \in (\mathbb{R},\leq)^{2}$ and $b,c\in J$ such that $a\geq b$ and $a\geq c$, then $M(c\leq a) \circ  M(b{\overset{}{\rightsquigarrow}}c)=M(b\leq a)$, where $b{\overset{}{\rightsquigarrow}}c$ is any zigzag path in $J$ from $b$ to $c$.
\end{lemma}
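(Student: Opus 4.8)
The plan is to realize $M(b\overset{}{\rightsquigarrow}c)$ as one of the structure maps of a \emph{limit cone} and then extract the identity from the universal property. By Lemma~\ref{prop-lemma-1} the morphism $M(b\overset{}{\rightsquigarrow}c)$ is independent of the zigzag path chosen inside $J$, so I am free to compute everything inside the sub-poset
$$J^{+}:=\{\,j\in J : a\leq j\,\},$$
which is convex, being the intersection of $J$ with the up-set $\{x\in(\mathbb{R},\leq)^{2} : x\geq a\}$, and which contains both $b$ and $c$.

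The first step — and essentially the only delicate one — is the geometric claim that $J^{+}$ is connected; equivalently, that $b$ and $c$ can be joined by a zigzag path in $J$ all of whose vertices lie above $a$. I would prove this by taking a reduced staircase $b=z_{0}-z_{1}-\cdots-z_{n}=c$ in $J$ (one exists since $J$ is convex and connected) and pushing it up into $J^{+}$ by induction on $n$: if some interior vertex $z_{i}$ already satisfies $z_{i}\geq a$, split the staircase at $z_{i}$ and apply the inductive hypothesis to the two shorter pieces; if no interior vertex lies above $a$, then the arrow $b-z_{1}$ cannot point "up" (otherwise $z_{1}\geq b\geq a$), so $z_{1}\leq b$, whence $z_{1}\vee a$ lies between $z_{1}$ and $b$ and therefore belongs to $J$ by convexity while still lying above $a$; one argues symmetrically at the $c$-end and reduces. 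Making sure that the vertices one inserts genuinely lie in $J$ is where the convexity of $J$ must be used carefully, and this is the heart of the lemma.

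Granting that $J^{+}$ is connected, the rest is formal. The poset $J^{+}$ is convex and connected and every comparison map of $M$ restricted to it is an isomorphism, so Proposition~\ref{prop-sim-1} applies: the cone $\sigma$ from the constant functor at $Mb$ to $M|_{J^{+}}$ with components $\sigma_{j}=M(b\overset{}{\rightsquigarrow}j)$ (these are well defined on $J^{+}$ and, by Lemma~\ref{prop-lemma-1}, agree with the maps computed in $J$; note $\sigma_{b}=\id_{Mb}$) is a limit cone. On the other hand, since $a\leq j$ for every $j\in J^{+}$, functoriality of $M$ makes $\{M(a\leq j)\}_{j\in J^{+}}$ a cone from $Ma$ to $M|_{J^{+}}$. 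Hence there is a unique $\varphi\colon Ma\to Mb$ with $\sigma_{j}\circ\varphi=M(a\leq j)$ for all $j\in J^{+}$. Evaluating at $j=b$ forces $\varphi=M(a\leq b)$, and then evaluating at $j=c$ gives exactly $M(b\overset{}{\rightsquigarrow}c)\circ M(a\leq b)=M(a\leq c)$, which is the assertion. The dual statement is obtained by the same argument applied to the down-set $\{x : x\leq a\}$, using the colimit description of $Mb$ (i.e. Proposition~\ref{prop-sim-1} in the opposite poset) and the cocone $\{M(j\leq a)\}$.

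The main obstacle is thus the geometric step: showing that $J\cap\{x\geq a\}$ is connected, i.e. that the pushed-up staircase can be kept inside $J$. Everything downstream of that is a routine invocation of the limit picture from Proposition~\ref{prop-sim-1} and Lemma~\ref{prop-lemma-1}. A more computational route that avoids limits entirely is to induct directly on the length of a reduced staircase from $b$ to $c$, in the style of Lemma~\ref{prop-lemma-0}: one peels off an extreme arrow whenever it points toward $b$ or toward $c$ (using $a\leq b$, $a\leq c$, and functoriality) and shortcuts a "valley" through a corner supplied by convexity otherwise; this runs into precisely the same difficulty in the case where the staircase has a strict local minimum adjacent to each of $b$ and $c$.
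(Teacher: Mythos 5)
Your conceptual framing is genuinely different from the paper's: the paper establishes the identity by a direct induction on the length of a reduced staircase (peeling self-intersections, shortcutting \textsf{BBG}/\textsf{GBB}/\textsf{BGG}/\textsf{GGB} subpaths via convexity, and splitting at a vertex $\geq a$ in the alternating case), whereas you reduce the statement to a geometric fact — connectedness of $J^{+}=J\cap\uparrow a$ — and then extract the identity formally from the universal property of the limit via Proposition~\ref{prop-sim-1}. The downstream limit argument is correct once that fact is granted: $\{M(a\leq j)\}_{j\in J^{+}}$ is a cone, the induced map $\varphi$ equals $M(a\leq b)$ by evaluating at $j=b$, and evaluating at $j=c$ gives the claim (with Lemma~\ref{prop-lemma-1} ensuring the path can be taken inside $J^{+}$ without changing $M(b\rightsquigarrow c)$). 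However, the geometric claim — which you correctly identify as the heart of the matter — is where your sketch has a real gap.

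Concretely, in the case where no interior vertex of the staircase lies above $a$, your move of replacing $z_{1}$ by $z_{1}\vee a$ (and symmetrically $z_{n}$ by $z_{n}\vee a$) does not reduce anything: $z_{1}\vee a$ need not be comparable to $z_{2}$ (e.g.\ if $b-z_{1}$ is a bad horizontal arrow and $z_{1}-z_{2}$ a good vertical one, $z_{1}\vee a$ moves right of $z_{2}$ while staying below it), so the ``pushed-up'' sequence is not a zigzag path, and the phrase ``argues symmetrically at the $c$-end and reduces'' is not backed by an argument that actually shortens the staircase or lands it inside $J^{+}$. The missing observation is the one the paper's case~(3) exploits: a reduced staircase with alternating G/B arrows is coordinatewise monotone between its endpoints, hence every vertex has both coordinates sandwiched between those of $b$ and $c$ and so lies above $a$ whenever $b,c\geq a$; consequently, if no interior vertex lies above $a$ the staircase can be neither all-good, all-bad, nor alternating, so it must contain a \textsf{BBG}/\textsf{GBB}/\textsf{BGG}/\textsf{GGB} subpath, which can be shortcut through a corner supplied by convexity to strictly decrease length and close the induction. (Your final paragraph flags this as a ``difficulty'' for the computational route too, but it is not: the monotonicity observation disposes of the strict-local-minimum/alternating case outright.) With that lemma supplied, your proof is a clean and valid alternative; as written, the connectedness of $J^{+}$ is asserted rather than proved, and the proof of the lemma is incomplete.
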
  

\begin{proof}
Let $p:b-x_1-\cdots-x_n-c$ be a zigzag path in $J$. Since $J$ is convex, we may assume $p$ is a reduced staircase. Induction on the length of $p$ (denoted by $l(p)$). When $l(p)=1$, $b$ and $c$ are comparable. Without loss of generality, we assume $b\leq c$. Note that $J$ is thin, hence $M(b\leq c)\circ M(a\leq b)=M(a\leq c)$.

When $l(p)=2$, there are 4 cases: (1) $b\leq x_1\leq c$; (2) $b\leq x_1\geq c$; (3) $b\geq x_1\leq c$; (4) $b\geq x_1\geq c$. Note that $p$ is a staircase in $J\subseteq (\mathbb{R},\leq)^{2}$ where both endpoints of $p$ is no smaller than $a$, hence $a\leq x_1$. Since $J$ is thin, we obtain $\hat{M}(b,x_1)\circ M(a\leq b)=M(a\leq x_1)$ and $\hat{M}(x_1,c)\circ M(a\leq x_1)=M(a\leq c)$, which implies $M(b{\overset{p}{\rightsquigarrow}}c)\circ M(a\leq b)=M(a\leq c)$.

Now we assume the statement is true when the length of any zigzag path (connecting $b$ and $c$) is no more than $N$. Let $p:b-x_1-x_2-\cdots-x_N-c$ be a zigzag path from $b$ to $c$ of length $N+1$. There are 3 cases: (1) $p$ has a self-intersection; (2) $p$ has no self-intersection but contains one of the following subpaths: BBG, GBB, BGG, or GGB; (3) $p$ has no self-intersection, and the arrows of $p$ alternate.

Cases (1) and (2) can be proved using similar arguments as in Lemma~\ref{prop-lemma-0} because we can replace those subpaths with a new subpath of length at most $2$. For case (3), note that $a\leq b$ and $a\leq c$ in $(\mathbb{R},\leq)^{2}$, there exists $i\in\{1,\dots, N\}$ such that $x_i$ is comparable to $a$. The induction hypothesis implies that $M(b{\overset{p}{\rightsquigarrow}}x_i)\circ M(a\leq b)=M(a\leq x_i)$ and $M(x_i{\overset{p}{\rightsquigarrow}}c)\circ M(a\leq x_i)=M(a\leq c)$. Therefore, $M(b{\overset{p}{\rightsquigarrow}}c)\circ M(a\leq b)=M(a\leq c)$. 

By induction, we conclude that $M(b{\overset{p}{\rightsquigarrow}}c)\circ M(a\leq b)=M(a\leq c)$ for all zigzag $p$ in $J$.
\end{proof}

\begin{lemma}
Let $a, a'\in J_1$ and $b, b'\in J_2$ such that $a\leq b$ and $a'\leq b'$, where $J_1$ and $J_2$ are as defined above. Then $M(b{\overset{}{\rightsquigarrow}}b')\circ M(a\leq b)=M(a'\leq b')\circ M(a{\overset{}{\rightsquigarrow}}a')$.
\end{lemma}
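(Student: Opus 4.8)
The plan is to reduce the identity to a combinatorial fact about monotone staircases and run an induction. Write $M(a\rightsquigarrow a')$ and $M(b\rightsquigarrow b')$ for the transports through $J_1$ and $J_2$; both are well defined, path-independent (Lemma~\ref{prop-lemma-1}), and invertible. First I would dispose of every configuration in which $a,a'$ are comparable, or $b,b'$ are comparable, or $a\le b'$, or $a'\le b$. Each follows from Lemma~\ref{prop-lemma-2} and functoriality: e.g.\ if $a\le b'$, the first form of Lemma~\ref{prop-lemma-2} (in $J_2$, for $a\le b$ and $a\le b'$) gives $M(b\rightsquigarrow b')\circ M(a\le b)=M(a\le b')$ and the dual form (in $J_1$, for $b'\ge a$ and $b'\ge a'$) gives $M(a'\le b')\circ M(a\rightsquigarrow a')=M(a\le b')$; if $a'\le b$, the dual form in $J_1$ gives $M(a\le b)=M(a'\le b)\circ M(a\rightsquigarrow a')$ and the first form in $J_2$ gives $M(b\rightsquigarrow b')\circ M(a'\le b)=M(a'\le b')$; the cases $a\le a'$, $a'\le a$, $b\le b'$, $b'\le b$ reduce to one of these via $a\le a'\le b'$, $a'\le a\le b$, $a\le b\le b'$, $a'\le b'\le b$. (Moreover $a\ge b'$ and $b\le a'$ cannot occur: by Lemma~\ref{lemma:no-inverse}, the relation $a\le b$ together with $J_1\cap J_2=\emptyset$ forbids any element of $J_1$ from dominating an element of $J_2$.)

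This leaves the \emph{essential case}: $a,a'$ incomparable, $b,b'$ incomparable, $a\not\le b'$, $a'\not\le b$. Since the statement is equivalent to the one obtained by swapping $(a,b)\leftrightarrow(a',b')$ (one identity converts into the other using that $M(a\rightsquigarrow a')$ and $M(b\rightsquigarrow b')$ are invertible), I may assume $a'$ lies strictly to the northwest of $a$; the essential-case inequalities then force $b'$ strictly to the northwest of $b$, with $a'_1\le b'_1<a_1\le b_1$ and $a_2\le b_2<a'_2\le b'_2$. Now induct on $l(p)+l(q)$, where $p\colon a=x_0-\cdots-x_n=a'$ and $q\colon b=y_0-\cdots-y_m=b'$ are reduced staircases in $J_1,J_2$ (possible by convexity; here $n,m\ge 2$). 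As in Lemmas~\ref{prop-lemma-0} and~\ref{prop-lemma-2} one may remove self-intersections (a loop is the identity by Lemma~\ref{prop-lemma-1}) and any subpath of shape $BBG,GBB,BGG,GGB$ (replace it by one of length $\le 2$ using the $\mathbb R^2$-structure), each reduction strictly lowering $l(p)+l(q)$; a simple reduced staircase with no such subpath is all-good, all-bad, or strictly alternating, and the first two return us to the comparable cases. Since a strictly alternating reduced staircase of northwest displacement uses only up- and left-arrows, we are reduced to the case that $p$ and $q$ are \emph{monotone} staircases.

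The crux is to produce a \emph{new bridge}: indices $i,j$ with $x_i\le y_j$ and $(i,j)\notin\{(0,0),(n,m)\}$. If none existed, then $(1,0)$ and $(0,1)$ would not be bridges, so $x_1\not\le b$ and $a\not\le y_1$; monotonicity then forces the first arrow of $p$ to be an up-arrow whose endpoint has second coordinate $>b_2$, and the first arrow of $q$ to be a left-arrow whose endpoint has first coordinate $<a_1$. But then $a\le(a_1,b_2)\le x_1$ and $y_1\le(a_1,b_2)\le b$, so convexity forces $(a_1,b_2)\in J_1\cap J_2$, contradicting disjointness. Given such $i,j$, split $p$ at $x_i$ and $q$ at $y_j$: the pairs $(x_0\rightsquigarrow x_i,\ y_0\rightsquigarrow y_j)$ and $(x_i\rightsquigarrow x_n,\ y_j\rightsquigarrow y_m)$ satisfy the hypotheses with bridging relation $x_i\le y_j$ and have strictly smaller length-sum, so the inductive hypothesis gives
$$M(b\rightsquigarrow y_j)\circ M(a\le b)=M(x_i\le y_j)\circ M(a\rightsquigarrow x_i),$$
$$M(y_j\rightsquigarrow b')\circ M(x_i\le y_j)=M(a'\le b')\circ M(x_i\rightsquigarrow a');$$
composing these and using $M(b\rightsquigarrow b')=M(y_j\rightsquigarrow b')\circ M(b\rightsquigarrow y_j)$ and $M(a\rightsquigarrow a')=M(x_i\rightsquigarrow a')\circ M(a\rightsquigarrow x_i)$ (Lemma~\ref{prop-lemma-1}) yields the claim. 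I expect the new-bridge step to be the main obstacle: it is the one place the hypothesis $J_1\cap J_2=\emptyset$ is used, playing the role convexity plays in Lemma~\ref{lemma:no-inverse}; indeed, without disjointness two monotone staircases can dodge one another, no new bridge exists, and the identity genuinely fails, which is why disjointness is built into Definition~\ref{def-path2}. The remaining work is bookkeeping — checking that the elementary reductions really decrease $l(p)+l(q)$ and stay inside the setup, and that the two split pairs again satisfy the lemma's hypotheses.
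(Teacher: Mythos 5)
Your proof is correct, and the overall strategy — reduce to a reduced monotone staircase, then induct, splitting each instance at a ``bridge'' $x_i\leq y_j$ between the two staircases and recursing on the two shorter pieces — is the same as the paper's. The differences are in the details, and here your version is arguably tighter. First, you induct on $l(p)+l(q)$ rather than on $l(p)$ alone; this frees you to find a bridge adjacent to the starting endpoints (at $(1,0)$ or $(0,1)$), whereas the paper's single-variable induction forces it to locate a bridge at some interior vertex $x_i$ with $1\le i\le N$ of $p$. Second, your bridge-existence argument is explicit and elementary: assuming neither $x_1\le b$ nor $a\le y_1$, monotonicity pins down the first steps of $p$ and $q$, and then $(a_1,b_2)$ lies simultaneously between $a$ and $x_1$ in $J_1$ and between $y_1$ and $b$ in $J_2$, so convexity forces $(a_1,b_2)\in J_1\cap J_2$, contradicting disjointness. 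The paper instead asserts a topological separation statement — that no zigzag path joins $b$ and $b'$ in $(\mathbb{R},\leq)^2\setminus(\uparrow x_i\cup\downarrow x_i)$ — without proof, and it is not obviously true for an arbitrary interior $x_i$; your lattice-point argument is more transparent and localizes exactly where the hypothesis $J_1\cap J_2=\emptyset$ is used. Your preliminary reduction of all configurations with $a,a'$ or $b,b'$ comparable (or $a\le b'$, or $a'\le b$) to Lemma~\ref{prop-lemma-2}, together with the symmetry $(a,b)\leftrightarrow(a',b')$, is also a cleaner organization of the base cases than the paper's case-by-case enumeration at $l(p)\in\{0,1,2\}$.
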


\begin{proof}
Let $p: a-x_1-\cdots- x_n-a'$ be a zigzag path in $J_1$ and $q: b-y_1-\cdots- y_m-b'$ be a zigzag path in $J_2$. Since $J_1$ is convex, we may assume $p$ and $q$ are reduced staircases. Induction on the length of $p$. When $l(p)=0$, Lemma~\ref{prop-lemma-2} implies that the statement is true. When $l(p)=1$, $a$ and $a'$ are comparable. Without loss of generality, we assume $a\leq a'$. Note that $a'\leq b'$, Lemma~\ref{prop-lemma-2} implies that $M(a\leq b')=M(b{\overset{}{\rightsquigarrow}}b')\circ M(a\leq b)$ and $M(a\leq b')=M(a'\leq b')\circ M(a{\overset{}{\rightsquigarrow}}a')$. Therefore, the statement is true $l(p)=1$.

When $l(p)=2$, there are 4 cases: (1) $a\leq x_1\leq a'$; (2) $a\leq x_1\geq a'$; (3) $a\geq x_1\leq a'$; (4) $a\geq x_1\geq a'$. Note that $J_1$ is thin, Lemma~\ref{prop-lemma-2} implies that the statement is true for cases (1), (3), and (4). For case (2), there are two subcases: (i) $x_1$ is comparable to $b$ or $b'$ (ii) $x_1$ is not comparable to $b$ and $b'$. For case (i), Since $J_1\cap J_2=\emptyset$, Lemma~\ref{lemma:no-inverse} implies $x_1\leq b$ or $x_1\leq b'$. Without loss of generality, we assume $x_1\leq b$. Note that $(\mathbb{R},\leq)^{2}$ is thin, hence $M(a\leq b)=M(x_1\leq b)\circ M(a\leq x_1)$. Note that $a'\rightarrow x_1$ is a path of length $1$, therefore, $M(b{\overset{}{\rightsquigarrow}}b')\circ M(x_1\leq b)=M(a'\leq b')\circ M(a'\leq x_1)=M(a'\leq b')\circ M(x_1{\overset{}{\rightsquigarrow}}a')$. For case (ii), note that there is no zigzag path in $(\mathbb{R},\leq)^{2}-\uparrow x_1-\downarrow x_1$ connecting $b$ and $b'$, where $\uparrow x_1$ is the principal upset at $x_1$ and $\downarrow x_1$ is the principal downset at $x_1$. Therefore, there exists $y\in\{b,b',y_1,\dots,y_m\}$ such that $x_1\leq y$ or $x_1\geq y$. Since $J_1\cap J_2=\emptyset$, Lemma~\ref{lemma:no-inverse} implies $x_1\leq y$. Since the length of path $a-x_1$ and $x_1-a'$ is $1$, we obtain $M(x_1\leq y)\circ M(a\leq x_1)=M(b{\overset{}{\rightsquigarrow}}y)\circ M(a\leq b)$ 
and 
$M(y{\overset{}{\rightsquigarrow}}b')\circ M(x_1\leq y)= M(a'\leq b')\circ M(a'\leq x_1)^{-1}$. Therefore
\begin{equation}
    \begin{aligned}
        M(a'\leq b')\circ M(a'\leq x_1)^{-1}\circ M(a\leq x_1)&=M(y{\overset{}{\rightsquigarrow}}b')\circ M(x_1\leq y)\circ M(a\leq x_1)\\
        &=M(y{\overset{}{\rightsquigarrow}}b')\circ M(b{\overset{}{\rightsquigarrow}}y)\circ M(a\leq b)\\
        &=M(b{\overset{}{\rightsquigarrow}}b')\circ M(a\leq b)
    \end{aligned}
\end{equation}

Now we assume the statement is true when the length of the zigzag path is less or equal to $N$. Let $p: a-x_1-\cdots- x_N-a'$ be a zigzag path in $J_1$. There are 3 cases: (1) $p$ has a self-intersection; (2) $p$ has no self-intersection but contains one of the following subpaths: BBG, GBB, BGG, or GGB; (3) $p$ has no self-intersection, and the arrows of $p$ alternate.

Cases (1) and (2) can be proved using similar arguments as in Lemma~\ref{prop-lemma-0}. For case (3), there are two subcases: (i) $x_i$ is comparable to $b$ or $b'$ for some $i\in\{1,\dots, N\}$ (ii) $x_i$ is not comparable to $b$ and $b'$ for all $i\in\{1,\dots, N\}$.  For case (i), Since $J_1\cap J_2=\emptyset$, Lemma~\ref{lemma:no-inverse} implies $x_i\leq b$ or $x_i\leq b'$. Without loss of generality, we assume $x_i\leq b$. Lemma~\ref{prop-lemma-2} implies $M(a\leq b)=M(x_1\leq b')\circ M(a{\overset{}{\rightsquigarrow}}x_1)$, and the induction hypothesis implies $M(b{\overset{}{\rightsquigarrow}}b')\circ M(x_1\leq b)=M(a'\leq b')\circ M(x_1{\overset{}{\rightsquigarrow}}a')$. Therefore,
\begin{equation}
    \begin{aligned}
        M(b{\overset{}{\rightsquigarrow}}b')\circ M(a\leq b)
        &=M(b{\overset{}{\rightsquigarrow}}b')\circ M(x_1\leq b')\circ M(a{\overset{}{\rightsquigarrow}}x_1)\\
        &=M(a'\leq b')\circ M(x_1{\overset{}{\rightsquigarrow}}a')\circ M(a{\overset{}{\rightsquigarrow}}x_1)\\
        &=M(a'\leq b')\circ M(a{\overset{}{\rightsquigarrow}}a')
    \end{aligned}
\end{equation}

For case (ii), note that there is no zigzag path connecting $b$ and $b'$ in $(\mathbb{R},\leq)^{2}-\uparrow x_i-\downarrow x_i$ for any $i\in\{1,\dots, N\}$, there exists $y\in \{b,b',y_1,\dots, y_m\}$ such that $x_i\leq y$ or $x_i\geq y$. Since $J_1\cap J_2=\emptyset$, Lemma~\ref{lemma:no-inverse} implies $x_i\leq y$. The induction hypothesis implies that $M(b{\overset{}{\rightsquigarrow}}y)\circ M(a\leq b)= M(x_i\leq y)\circ M(a{\overset{}{\rightsquigarrow}}x_i)$ 
and
$M(y{\overset{}{\rightsquigarrow}}b')\circ M(x_i\leq y)=M(a'\leq b')\circ M(x_i{\overset{}{\rightsquigarrow}}a')$. Therefore, 
\begin{equation}
    \begin{aligned}
        M(a'\leq b')\circ M(a{\overset{}{\rightsquigarrow}}a')&=M(a'\leq b')\circ M(x_i{\overset{}{\rightsquigarrow}}a')\circ M(a{\overset{}{\rightsquigarrow}}x_i)\\
        &=M(y{\overset{}{\rightsquigarrow}}b')\circ M(x_i\leq y)\circ M(a{\overset{}{\rightsquigarrow}}x_i)\\
        &=M(y{\overset{}{\rightsquigarrow}}b')\circ M(b{\overset{}{\rightsquigarrow}}y)\circ M(a\leq b)\\
        &=M(b{\overset{}{\rightsquigarrow}}b')\circ M(a\leq b)
    \end{aligned}
\end{equation}

By induction, we conclude that $M(b{\overset{}{\rightsquigarrow}}b')\circ M(a\leq b)=M(a'\leq b')\circ M(a{\overset{}{\rightsquigarrow}}a')$.
\end{proof}

\begin{proof}[Proof of Proposition~\ref{prop-sim-1.5}]
    Note that Lemma~\ref{prop-lemma-1} implies that $M(a_0{\overset{}{\rightsquigarrow}}a'_0)\circ M(a{\overset{}{\rightsquigarrow}}a_0)=M(a{\overset{}{\rightsquigarrow}}a'_0)$ and $M(b_0{\overset{}{\rightsquigarrow}}b'_0)\circ M(b{\overset{}{\rightsquigarrow}}b_0)=M(b{\overset{}{\rightsquigarrow}}b'_0)$.
    Hence   
    \begin{equation}
        \begin{aligned}
            M&(b{\overset{}{\rightsquigarrow}}b'_0)^{-1} \circ M(a'_0\leq b'_0) \circ M(a{\overset{}{\rightsquigarrow}}a'_0)\\
            &= M(b{\overset{}{\rightsquigarrow}}b_0)^{-1}\circ M(b_0{\overset{}{\rightsquigarrow}}b'_0)^{-1} \circ M(a'_0\leq b'_0) \circ M(a_0{\overset{}{\rightsquigarrow}}a'_0)\circ M(a{\overset{}{\rightsquigarrow}}a_0)\\
            &= M(b{\overset{}{\rightsquigarrow}}b_0)^{-1}\circ M(a_0\leq b_0)\circ M(a{\overset{}{\rightsquigarrow}}a_0)
        \end{aligned}
    \end{equation}   
\end{proof}

Theorem~\ref{thm-sim-1} is a direct consequence of Proposition~\ref{prop-sim-2}.

\begin{proposition}\label{prop-sim-2}
Let $M:(\mathbb{R},\leq)^{2}\rightarrow \cat{vect}_\mathbb{F}$ be a persistence module. Let $(J_1,\leq)$ and $(J_2,\leq)$ be connected and convex subposets of $(\mathbb{R},\leq)^{2}$ such that $J_1\cap J_2=\emptyset$ and $M(i\leq j): Mi\rightarrow Mj$ is an isomorphism for all $i\leq j\in J_k$, $k=1,2$. If there exists $a\in J_1$ and $b\in J_2$ such that $a\leq b$, then there exists a well-defined morphism $\phi_{J_1J_2}:\lim_{J_1}M\rightarrow \lim_{J_2}M$ such that the following diagram commutes:
$$
\begin{tikzcd}
\lim_{J_1}M\arrow[r, dashed, "\phi_{J_1J_2}"]\arrow[d, "\sigma_a"']
&\lim_{J_2}M\arrow[d, "\tau_b"]\\
Ma \arrow[r, "M(a\leq b)"'] 
    & Mb 
\end{tikzcd}
$$
where $\sigma_i$ is the leg map of the limit cone $\lim_{J_1}M\Rightarrow\restr{M}{J_1}$ and $\tau_j$ is the leg map of the limit cone $\lim_{J_2}M\Rightarrow\restr{M}{J_2}$.
\end{proposition}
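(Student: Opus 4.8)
The plan is to build $\phi_{J_1J_2}$ as the unique factorization through the limit cone of $J_2$, and then check naturality using the results already established. Concretely, fix $a\in J_1$ and $b\in J_2$ with $a\leq b$, and consider the family of maps $\psi_j:\lim_{J_1}M\to Mj$ for $j\in J_2$ defined by $\psi_j := M(a\overset{}{\rightsquigarrow} j)\circ\sigma_a = M(b\overset{}{\rightsquigarrow} j)\circ M(a\leq b)\circ\sigma_a$, where the zigzag notation for a map from $J_1$ into $J_2$ is the one from Definition~\ref{def-path2} and the two expressions agree by Lemma~\ref{prop-lemma-2}. First I would verify that $(\psi_j)_{j\in J_2}$ is a cone over $\restr{M}{J_2}$ with apex $\lim_{J_1}M$: for $j_1\leq j_2$ in $J_2$ one needs $M(j_1\leq j_2)\circ\psi_{j_1}=\psi_{j_2}$, which is exactly Lemma~\ref{prop-lemma-1} (or the composition identity $M(j_2\overset{}{\rightsquigarrow}j)\circ M(j_1\leq j_2) = M(j_1\overset{}{\rightsquigarrow}j)$ that it provides) applied inside $J_2$. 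By the universal property of $\lim_{J_2}M$ there is then a unique $\phi_{J_1J_2}:\lim_{J_1}M\to\lim_{J_2}M$ with $\tau_j\circ\phi_{J_1J_2}=\psi_j$ for all $j\in J_2$; taking $j=b$ gives $\tau_b\circ\phi_{J_1J_2}=M(a\leq b)\circ\sigma_a$, which is the asserted commuting square.

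The remaining task is well-definedness, i.e.\ independence of the choice of the comparable pair $(a,b)$. Suppose $a'\in J_1$, $b'\in J_2$ with $a'\leq b'$. I would show the cone $(\psi_j)$ built from $(a,b)$ coincides with the cone $(\psi_j')$ built from $(a',b')$; since the factorization through a limit is unique, equality of the cones forces equality of the two candidate maps $\phi_{J_1J_2}$. For a fixed $j\in J_2$, expand $\psi_j = M(b\overset{}{\rightsquigarrow}j)\circ M(a\leq b)\circ\sigma_a$ and $\psi_j' = M(b'\overset{}{\rightsquigarrow}j)\circ M(a'\leq b')\circ\sigma_{a'}$. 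Using $\sigma_a = \sigma_{a'}\circ(\text{nothing})$ — more precisely $\sigma_{a'} = M(a\overset{}{\rightsquigarrow}a')\circ\sigma_a$ by the cone property of the limit cone over $J_1$ together with Lemma~\ref{prop-lemma-1} — the equality $\psi_j=\psi_j'$ reduces to
\begin{equation}
M(b\overset{}{\rightsquigarrow}j)\circ M(a\leq b) = M(b'\overset{}{\rightsquigarrow}j)\circ M(a'\leq b')\circ M(a\overset{}{\rightsquigarrow}a'),
\end{equation}
which, after composing on the left with $M(j\overset{}{\rightsquigarrow}b')$ and using Lemma~\ref{prop-lemma-1} inside $J_2$, is exactly the statement $M(b\overset{}{\rightsquigarrow}b')\circ M(a\leq b) = M(a'\leq b')\circ M(a\overset{}{\rightsquigarrow}a')$ proved in the unnamed lemma immediately preceding Proposition~\ref{prop-sim-1.5}. (Note Lemma~\ref{lemma:no-inverse} guarantees that the hypothesis $J_1\cap J_2=\emptyset$ is compatible with the existence of both comparable pairs $a\leq b$ and $a'\leq b'$ pointing the same way, so no contradictory configuration arises.)

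I expect the main obstacle to be purely bookkeeping: making sure the zigzag-composition identities from Lemmas~\ref{prop-lemma-0}, \ref{prop-lemma-1}, and \ref{prop-lemma-2}, and from the lemma before Proposition~\ref{prop-sim-1.5}, are invoked in the correct direction (e.g.\ distinguishing a zigzag entirely inside $J_2$ from a zigzag that starts in $J_1$, crosses via some comparable pair, and lands in $J_2$), and that $\sigma_a$, $\tau_b$ really are the leg maps constructed in Proposition~\ref{prop-sim-1}. Once the cone $(\psi_j)$ is in hand, the existence half is immediate from the universal property, and the well-definedness half is a three-line diagram chase built on the already-proven compatibility lemma; no genuinely new argument is required. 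I would close by remarking that, by symmetry of the construction, $\phi_{J_2J_3}\circ\phi_{J_1J_2}=\phi_{J_1J_3}$ whenever all three composites are defined, which is what makes the functor $N$ of Theorem~\ref{thm-sim-1} well-defined — though strictly that observation belongs to the proof of the theorem rather than the proposition.
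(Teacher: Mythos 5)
Your proposal is correct and, at the level of logical content, matches the paper's proof: both constructions produce the same map $\phi_{J_1J_2} = \tau_b^{-1}\circ M(a\leq b)\circ\sigma_a$ and reduce well-definedness to the compatibility identity $M(b{\overset{}{\rightsquigarrow}}b')\circ M(a\leq b) = M(a'\leq b')\circ M(a{\overset{}{\rightsquigarrow}}a')$ established just before Proposition~\ref{prop-sim-1.5}. The only presentational difference is that the paper exploits the fact that $\sigma_a$ and $\tau_b$ are isomorphisms (by the explicit limit-cone construction in Proposition~\ref{prop-sim-1}) and writes $\phi_{J_1J_2}$ directly, whereas you package the legs $\psi_j = M(a{\overset{}{\rightsquigarrow}}j)\circ\sigma_a$ into a cone over $\restr{M}{J_2}$ and invoke the universal property — a bit more categorically hygienic, as it verifies commutation with \emph{all} of the legs $\tau_j$ rather than just $\tau_b$, but it is not a shorter or a genuinely different argument.

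One small correction: the identity $M(a{\overset{}{\rightsquigarrow}}j) = M(b{\overset{}{\rightsquigarrow}}j)\circ M(a\leq b)$ that you use to give the two descriptions of $\psi_j$ does not come from Lemma~\ref{prop-lemma-2} (which concerns two points of the \emph{same} chamber $J$ sitting above a common $a$, and would require $a\leq j$, which fails for general $j\in J_2$). It is instead immediate from Definition~\ref{def-path2} by choosing the anchor pair $(a_0,b_0)=(a,b)$, with the freedom to vary the anchor supplied by Proposition~\ref{prop-sim-1.5}. With that citation fixed, the argument is complete.
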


\begin{proof}
We use the same construction as in Proposition~\ref{prop-sim-1}, then both $\sigma_i$ and $\tau_j$ are isomorphisms for all $i\in J_1$ and $j\in J_2$ because they are compositions of isomorphisms. Define $\phi_{J_1J_2}=\tau_b^{-1}\circ M(i\leq j)\circ \sigma_a$. Now we will show $\phi_{J_1J_2}$ does not depend on the choice of $\sigma_a$ and $\tau_b$.

Assume there exists $a'\leq b'$ where $a'\in J_1$ and $b'\in J_2$. Let $\sigma_{a'}: \lim_{J_1}M\rightarrow Ma'$ be the leg map of the limit cone and $\tau_{b'}: \lim_{J_2}M\rightarrow Mb'$ be the leg map of the limit cone. Since $J_1$ is connected, there exists a zigzag path (denoted by $a\rightsquigarrow a'$) from $a$ to $a'$. Similarly, since $J_2$ is connected, there exists a zigzag path (denoted by $b\rightsquigarrow b'$) from $b$ to $b'$. Lemma~\ref{prop-lemma-1} implies that $M(a\rightsquigarrow a')\circ \sigma_a=\sigma_a'$ and $M(b\rightsquigarrow b')\circ \tau_b=\tau_b'$. 

Note that 
\begin{equation}
\begin{aligned}
  \phi_{J_1J_2}&=\tau_b^{-1}\circ M(a\leq b)\circ \sigma_{a}\\
  &=\tau_b'^{-1}\circ M(b\rightsquigarrow b')\circ M(a\leq b)\circ M(a\rightsquigarrow a')^{-1}\circ \sigma_{a'}\\
  &=\tau_b'^{-1}\circ M(a\rightsquigarrow b')\circ M(a\rightsquigarrow a')^{-1}\circ \sigma_{a'}\\
  &=\tau_b'^{-1}\circ M(a'\leq b')\circ \sigma_{a'}
\end{aligned}
\end{equation}

Hence, $\phi_{J_1J_2}$ does not depend on the choice of $a$ and $b$.
\end{proof}

The following example shows that the assumption of connectedness of the chambers is necessary.

\begin{example}
    Let $(P,\leq)$ be a subposet of $(\mathbb{R},\leq)^2$ where $$P=\{(x,0)\mid x\leq 0\}\cup \{(0,y)\mid y\leq 0\}$$
    Let $M\in\cat{vect}_{\mathbb{F}}^{(\mathbb{R},\leq)^{2}}$ such that $$M((x,y)) = \begin{cases} 0, & \mbox{if } (x,y)\notin P\\ \mathbb{F}^2, & \mbox{if } (x,y)=(0,0)\\ \mathbb{F}, & \mbox{if } (x,y)\in P-\{(0,0)\}\end{cases}$$
    and 
$$M((x,y)\leq (x',y')) = \begin{cases} 
\id_{\mathbb{F}}, & \mbox{if } (x,y),(x',y')\in P-\{(0,0)\}\\
\begin{bmatrix}
1\\
0
\end{bmatrix}, & \mbox{if } (x,y)\in \{(x,0)\mid x\leq 0\}, (x,y')=(0,0)\\
\begin{bmatrix}
0\\
1
\end{bmatrix}, & \mbox{if } (x,y)\in \{(0,y)\mid y\leq 0\}, (x',y)=(0,0)\\
 0, & \mbox{else }\end{cases}$$
 
The support of $M$ has two isotopy regions: $P-\{(0,0)\}$ and $\{(0,0)\}$, where $P-\{(0,0)\}$ has two connected components. However, there is no canonical morphism $\mathbb{F}\rightarrow \mathbb{F}^2$ induced by the morphisms in $M$. In fact, the subdivision of $P$ into $P-\{(0,0)\}$ and $\{(0,0)\}$ is not a constant subdivision of $P$ subordinate to $M$.
\end{example}

Lemma~\ref{lemma:no-inverse} guarantees that there is no morphism $\phi_{J_2J_1}$ induced by the morphisms of $M$ under the assumption of Proposition~\ref{prop-sim-2}. The following example~\cite{miller2020homological} shows that the convexity assumption of the chambers is necessary.

\begin{example}
    Let $M\in\cat{vect}_{\mathbb{F}}^{(\mathbb{R},\leq)^{2}}$ such that $$M((x,y)) = \begin{cases} \mathbb{F}^2, & \mbox{if } (x,y)=(0,0)\\ \mathbb{F}, & \mbox{if } (x,y)\neq(0,0)\end{cases}$$
    and 
$$M((x,y)\leq (x',y')) = \begin{cases} 
\id_{\mathbb{F}}, & \mbox{if } (x,y),(x',y')\in \mathbb{R}^2-\{(0,0)\}\\
\begin{bmatrix}
1\\
0
\end{bmatrix}, & \mbox{if } (x',y')=(0,0)\\
\begin{bmatrix}
1 &1
\end{bmatrix}, & \mbox{if } (x,y)=(0,0)\\
 \id_{\mathbb{F}^2}, & \mbox{if } (x,y)=(x',y')=(0,0)\end{cases}$$

Note that $M$ has two isotopy regions: $\mathbb{R}^2-\{(0,0)\}$ and $\{(0,0)\}$, where $\mathbb{R}^2-\{(0,0)\}$ is not convex. The isotopy subdivision is also a constant subdivision of $(\mathbb{R},\leq)^2$ subordinate to $M$. However, there are canonical morphisms $\mathbb{F}\rightarrow \mathbb{F}^2$ and $\mathbb{F}^2\rightarrow \mathbb{F}$ induced by the morphisms in $M$.
\end{example}





\section{Finite Encodings of the $2$-Parameter p.f.d. Persistence Modules with Finite Convex Isotopy Subdivisions}
Let $M\in\cat{vect}_{\mathbb{F}}^{(\mathbb{R},\leq)^{2}}$. In this section, we show that there exists a finite encoding of $M$ if there exists a finite convex isotopy subdivision of  $(\mathbb{R},\leq)^{2}$ subordinate to $M$.

Construct a set (denoted by $P$) and a homogeneous binary relation (denoted by $\rightarrow$) on $P$ as follows:

\begin{itemize}
\item Each chamber $J_p$ in $(\mathbb{R},\leq)^{2}$ corresponds to an element $p\in P$;
\item Given $p,q\in P$. $p\rightarrow q$ if 
\begin{enumerate}
    \item $p=q$ or 
    \item $p\neq q$, and there exists $x\in J_p$ and $y\in J_q$ such that $x\leq y$.
\end{enumerate}
\end{itemize}

\begin{lemma}
   $(P,\leq)$ is a poset, where $\leq$ is the transitive closure of $\rightarrow$. 
\end{lemma}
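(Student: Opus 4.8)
The plan is to verify the three defining properties of a partial order — reflexivity, antisymmetry, and transitivity — for the relation $\leq$ obtained as the transitive closure of $\rightarrow$. Reflexivity is immediate: by clause (1) in the definition of $\rightarrow$, we have $p \rightarrow p$ for every $p \in P$, and the transitive closure only adds more pairs, so $p \leq p$. Transitivity is automatic, since the transitive closure of any relation is by construction transitive. So the entire content of the lemma is \textbf{antisymmetry}: if $p \leq q$ and $q \leq p$ then $p = q$. This is the step I expect to be the main obstacle, and everything hinges on the convexity and finiteness of the isotopy subdivision.

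To prove antisymmetry, suppose $p \leq q$ and $q \leq p$ with $p \neq q$. Unwinding the transitive closure, there is a cycle of chambers $p = p_0 \rightarrow p_1 \rightarrow \cdots \rightarrow p_k = p$ passing through $q$, where consecutive distinct chambers $p_i \neq p_{i+1}$ are witnessed by points $x_i \in J_{p_i}$, $y_i \in J_{p_{i+1}}$ with $x_i \leq y_i$. The idea is to promote this cyclic chain of ``$\leq$ witnesses between chambers'' into an actual comparison $z \leq z'$ (or even a closed zigzag) inside $(\mathbb{R},\leq)^2$ that forces two of the chambers to coincide. Concretely, within each chamber $J_{p_i}$ we have two points $y_{i-1}, x_i \in J_{p_i}$; since $J_{p_i}$ is connected there is a zigzag path joining them, and since $J_{p_i}$ is convex we may take it to be a reduced staircase. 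Splicing these intra-chamber zigzags together with the inter-chamber arrows $x_i \leq y_i$ produces a closed zigzag loop in $(\mathbb{R},\leq)^2$. Now the key point is that a closed loop in $(\mathbb{R},\leq)^2$ built from ``genuine upward'' steps between chambers cannot be strictly monotone all the way around: if every inter-chamber step were a strict increase in, say, the first coordinate while the intra-chamber staircases never decreased it below where they started relative to their own chamber, one would get $x_0 < x_0$, a contradiction. Making this precise is exactly where Lemma~\ref{lemma:no-inverse} is designed to help: it says that if one chamber contains a pair $a \leq b$ and another contains a pair $a' \geq b'$ with the appropriate linking, the two chambers must intersect — hence be equal, since they are blocks of a partition.

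So the cleaner route is: from the cycle $p_0 \rightarrow \cdots \rightarrow p_k = p_0$ through $q$, pick the ``extremal'' chamber in the cycle — say the one $J_{p_i}$ whose points attain the maximum value of (a generic linear functional such as) $x \mapsto x_1 + x_2$ over all the witness points appearing in the chain. Going around the loop, the chamber $J_{p_i}$ receives an incoming witness $x_{i-1} \leq y_{i-1} \in J_{p_i}$ and emits an outgoing witness $x_i \in J_{p_i}$, $x_i \leq y_i \in J_{p_{i+1}}$. By extremality of $J_{p_i}$, the point $y_i$ in the \emph{next} chamber satisfies $y_i \leq$ (something of no-larger functional value), and tracing the functional around the whole cycle one sees the functional must be constant on all witness points; combined with $x_i \leq y_i$ and the product order this forces $x_i = y_i$, so $x_i \in J_{p_i} \cap J_{p_{i+1}}$, whence $p_i = p_{i+1}$ — contradicting the chain being genuinely cyclic through a distinct $q$ (or, if no such distinct index survives, directly $p = q$). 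Finiteness of the subdivision guarantees the cycle has finite length so this extremal-element argument terminates; convexity is used to realize the intra-chamber connections as staircases and to invoke Lemma~\ref{lemma:no-inverse}. The bookkeeping of which witness points to track around the loop is the delicate part, but it is exactly the kind of argument Lemma~\ref{lemma:no-inverse} was set up to streamline.
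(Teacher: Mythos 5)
You correctly identify that the real content is antisymmetry of the transitive closure $\leq$, i.e.\ the absence of a directed cycle $p_0 \rightarrow p_1 \rightarrow \cdots \rightarrow p_k = p_0$ through distinct chambers. This is actually \emph{more} than the paper's own proof verifies: the paper only shows $\rightarrow$ itself is antisymmetric (a $2$-cycle $p\rightarrow q\rightarrow p$ with $p\neq q$ is ruled out by Lemma~\ref{lemma:no-inverse}) and then declares the lemma proved. Since antisymmetry of a reflexive relation is not in general inherited by its transitive closure, your instinct that something more is needed is sound, and you have located a gap that the paper itself glosses over. Your $k=2$ base case is exactly the paper's argument.

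However, your extremal-functional step does not close. Fix a cycle with witnesses $x_i \in J_{p_i}$, $y_i \in J_{p_{i+1}}$, $x_i \leq y_i$, and let $f(u) = u_1 + u_2$. Choose $J_{p_i}$ so that one of its witness points, $x_i$ or $y_{i-1}$, maximizes $f$ over all witnesses. If the maximum is attained at the \emph{outgoing} point $x_i$, then $f(x_i) \leq f(y_i) \leq f(x_i)$ forces $x_i = y_i$, a contradiction with disjointness. But if the maximum is attained only at the \emph{incoming} point $y_{i-1}$, there is no contradiction: the intra-chamber move from $y_{i-1}$ to $x_i$ can decrease $f$ arbitrarily (it is a zigzag, not monotone), and the next inter-chamber step only gives $f(x_i) \leq f(y_i)$, which is perfectly consistent with $f(y_i) \leq f(y_{i-1})$. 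The assertion that ``the functional must be constant on all witness points'' is therefore unjustified, and the argument has a hole exactly where the paper does. Ruling out longer cycles genuinely requires more than the $2$-cycle instance of Lemma~\ref{lemma:no-inverse} — for instance, an induction on cycle length in the spirit of the path-length induction used to prove Lemma~\ref{lemma:no-inverse} and Lemma~\ref{prop-lemma-0}, or an argument that first shows the supremum of all witness points cannot lie in any chamber of the cycle and then contracts the cycle.
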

\begin{proof}
    We only need to show that $\rightarrow$ is antisymmetric. Assume $p\rightarrow q$ and $q\rightarrow p$, but $p\neq q$. Then $J_p\cap J_q=\emptyset$. If there exists $x,x'\in J_p$ and $y,y'\in J_q$ such that $x\leq y$ and $x'\geq y'$, Lemma~\ref{lemma:no-inverse} implies that $J_p\cap J_q\neq \emptyset$. Contradiction. Hence $p=q$.
\end{proof}

For example, the poset $(P,\leq)$ associated to the hyperplane arrangement of $Y^2_{r,L_{e_1}}$ is shown in Figure~\ref{ch4.1:poset}, where $Y^2_{r,L_{e_1}}$ is the second configuration space of the $\mathsf{Star_3}$ graph with restraint parameter $r$ and edge length vector $L=(L_{e_1},1,1)$. 

\begin{figure}[hbtp!]
\centering
\resizebox{0.45\textwidth}{!}{

\tikzset{every picture/.style={line width=0.75pt}} 

\begin{tikzpicture}[x=0.75pt,y=0.75pt,yscale=-1,xscale=1]

\draw  (4.3,200.35) -- (192.8,200.35)(23.15,27.1) -- (23.15,219.6) (185.8,195.35) -- (192.8,200.35) -- (185.8,205.35) (18.15,34.1) -- (23.15,27.1) -- (28.15,34.1)  ;
\draw    (22.15,201.35) -- (188.5,35) ;
\draw    (70,36) -- (70,200) ;
\draw    (120,36) -- (120,200) ;
\draw    (70,200) -- (189.5,80.5) ;
\draw  [draw opacity=0][fill={rgb, 255:red, 245; green, 166; blue, 35 }  ,fill opacity=0.4 ] (70.5,36) -- (70,153) -- (22.65,200.35) -- (23.5,36) -- cycle ;
\draw  [draw opacity=0][fill={rgb, 255:red, 155; green, 155; blue, 155 }  ,fill opacity=0.4 ] (120,36) -- (120,104) -- (70,153) -- (70,36) -- cycle ;
\draw  [draw opacity=0][fill={rgb, 255:red, 189; green, 16; blue, 224 }  ,fill opacity=0.4 ] (188.5,36) -- (120,104) -- (120,36) -- cycle ;
\draw  [draw opacity=0][fill={rgb, 255:red, 74; green, 144; blue, 226 }  ,fill opacity=0.4 ] (188.5,36) -- (188.5,82) -- (119.5,150.5) -- (120,104) -- cycle ;
\draw  [draw opacity=0][fill={rgb, 255:red, 248; green, 231; blue, 28 }  ,fill opacity=0.4 ] (120,104) -- (119.5,150.5) -- (70,200) -- (70,153) -- cycle ;
\draw  [draw opacity=0][fill={rgb, 255:red, 126; green, 211; blue, 33 }  ,fill opacity=0.4 ] (70,153) -- (70,200) -- (23.15,200.35) -- cycle ;
\draw  [draw opacity=0][fill={rgb, 255:red, 208; green, 2; blue, 27 }  ,fill opacity=0.4 ] (119.5,150.5) -- (120,200) -- (70,200) -- cycle ;
\draw [color={rgb, 255:red, 74; green, 80; blue, 226 }  ,draw opacity=1 ]   (95.5,56.5) -- (43.5,56.5) ;
\draw [shift={(43.5,56.5)}, rotate = 180] [color={rgb, 255:red, 74; green, 80; blue, 226 }  ,draw opacity=1 ][fill={rgb, 255:red, 74; green, 80; blue, 226 }  ,fill opacity=1 ][line width=0.75]      (0, 0) circle [x radius= 3.35, y radius= 3.35]   ;
\draw [shift={(63.5,56.5)}, rotate = 360] [color={rgb, 255:red, 74; green, 80; blue, 226 }  ,draw opacity=1 ][line width=0.75]    (10.93,-3.29) .. controls (6.95,-1.4) and (3.31,-0.3) .. (0,0) .. controls (3.31,0.3) and (6.95,1.4) .. (10.93,3.29)   ;
\draw [shift={(95.5,56.5)}, rotate = 180] [color={rgb, 255:red, 74; green, 80; blue, 226 }  ,draw opacity=1 ][fill={rgb, 255:red, 74; green, 80; blue, 226 }  ,fill opacity=1 ][line width=0.75]      (0, 0) circle [x radius= 3.35, y radius= 3.35]   ;
\draw [color={rgb, 255:red, 74; green, 80; blue, 226 }  ,draw opacity=1 ]   (147.5,56.5) -- (95.5,56.5) ;
\draw [shift={(95.5,56.5)}, rotate = 180] [color={rgb, 255:red, 74; green, 80; blue, 226 }  ,draw opacity=1 ][fill={rgb, 255:red, 74; green, 80; blue, 226 }  ,fill opacity=1 ][line width=0.75]      (0, 0) circle [x radius= 3.35, y radius= 3.35]   ;
\draw [shift={(115.5,56.5)}, rotate = 360] [color={rgb, 255:red, 74; green, 80; blue, 226 }  ,draw opacity=1 ][line width=0.75]    (10.93,-3.29) .. controls (6.95,-1.4) and (3.31,-0.3) .. (0,0) .. controls (3.31,0.3) and (6.95,1.4) .. (10.93,3.29)   ;
\draw [shift={(147.5,56.5)}, rotate = 180] [color={rgb, 255:red, 74; green, 80; blue, 226 }  ,draw opacity=1 ][fill={rgb, 255:red, 74; green, 80; blue, 226 }  ,fill opacity=1 ][line width=0.75]      (0, 0) circle [x radius= 3.35, y radius= 3.35]   ;
\draw [color={rgb, 255:red, 74; green, 80; blue, 226 }  ,draw opacity=1 ]   (95.5,146.5) -- (43.5,193) ;
\draw [shift={(43.5,193)}, rotate = 138.2] [color={rgb, 255:red, 74; green, 80; blue, 226 }  ,draw opacity=1 ][fill={rgb, 255:red, 74; green, 80; blue, 226 }  ,fill opacity=1 ][line width=0.75]      (0, 0) circle [x radius= 3.35, y radius= 3.35]   ;
\draw [shift={(65.03,173.75)}, rotate = 318.2] [color={rgb, 255:red, 74; green, 80; blue, 226 }  ,draw opacity=1 ][line width=0.75]    (10.93,-3.29) .. controls (6.95,-1.4) and (3.31,-0.3) .. (0,0) .. controls (3.31,0.3) and (6.95,1.4) .. (10.93,3.29)   ;
\draw [shift={(95.5,146.5)}, rotate = 138.2] [color={rgb, 255:red, 74; green, 80; blue, 226 }  ,draw opacity=1 ][fill={rgb, 255:red, 74; green, 80; blue, 226 }  ,fill opacity=1 ][line width=0.75]      (0, 0) circle [x radius= 3.35, y radius= 3.35]   ;
\draw [color={rgb, 255:red, 74; green, 80; blue, 226 }  ,draw opacity=1 ]   (147.5,94.5) -- (95.5,146.5) ;
\draw [shift={(95.5,146.5)}, rotate = 135] [color={rgb, 255:red, 74; green, 80; blue, 226 }  ,draw opacity=1 ][fill={rgb, 255:red, 74; green, 80; blue, 226 }  ,fill opacity=1 ][line width=0.75]      (0, 0) circle [x radius= 3.35, y radius= 3.35]   ;
\draw [shift={(117.26,124.74)}, rotate = 315] [color={rgb, 255:red, 74; green, 80; blue, 226 }  ,draw opacity=1 ][line width=0.75]    (10.93,-3.29) .. controls (6.95,-1.4) and (3.31,-0.3) .. (0,0) .. controls (3.31,0.3) and (6.95,1.4) .. (10.93,3.29)   ;
\draw [shift={(147.5,94.5)}, rotate = 135] [color={rgb, 255:red, 74; green, 80; blue, 226 }  ,draw opacity=1 ][fill={rgb, 255:red, 74; green, 80; blue, 226 }  ,fill opacity=1 ][line width=0.75]      (0, 0) circle [x radius= 3.35, y radius= 3.35]   ;
\draw [color={rgb, 255:red, 74; green, 80; blue, 226 }  ,draw opacity=1 ]   (95.5,146.5) -- (95.5,56.5) ;
\draw [shift={(95.5,56.5)}, rotate = 270] [color={rgb, 255:red, 74; green, 80; blue, 226 }  ,draw opacity=1 ][fill={rgb, 255:red, 74; green, 80; blue, 226 }  ,fill opacity=1 ][line width=0.75]      (0, 0) circle [x radius= 3.35, y radius= 3.35]   ;
\draw [shift={(95.5,95.5)}, rotate = 90] [color={rgb, 255:red, 74; green, 80; blue, 226 }  ,draw opacity=1 ][line width=0.75]    (10.93,-3.29) .. controls (6.95,-1.4) and (3.31,-0.3) .. (0,0) .. controls (3.31,0.3) and (6.95,1.4) .. (10.93,3.29)   ;
\draw [shift={(95.5,146.5)}, rotate = 270] [color={rgb, 255:red, 74; green, 80; blue, 226 }  ,draw opacity=1 ][fill={rgb, 255:red, 74; green, 80; blue, 226 }  ,fill opacity=1 ][line width=0.75]      (0, 0) circle [x radius= 3.35, y radius= 3.35]   ;
\draw [color={rgb, 255:red, 74; green, 80; blue, 226 }  ,draw opacity=1 ]   (95.5,192) -- (95.5,146.5) ;
\draw [shift={(95.5,146.5)}, rotate = 270] [color={rgb, 255:red, 74; green, 80; blue, 226 }  ,draw opacity=1 ][fill={rgb, 255:red, 74; green, 80; blue, 226 }  ,fill opacity=1 ][line width=0.75]      (0, 0) circle [x radius= 3.35, y radius= 3.35]   ;
\draw [shift={(95.5,163.25)}, rotate = 90] [color={rgb, 255:red, 74; green, 80; blue, 226 }  ,draw opacity=1 ][line width=0.75]    (10.93,-3.29) .. controls (6.95,-1.4) and (3.31,-0.3) .. (0,0) .. controls (3.31,0.3) and (6.95,1.4) .. (10.93,3.29)   ;
\draw [shift={(95.5,192)}, rotate = 270] [color={rgb, 255:red, 74; green, 80; blue, 226 }  ,draw opacity=1 ][fill={rgb, 255:red, 74; green, 80; blue, 226 }  ,fill opacity=1 ][line width=0.75]      (0, 0) circle [x radius= 3.35, y radius= 3.35]   ;
\draw [color={rgb, 255:red, 74; green, 80; blue, 226 }  ,draw opacity=1 ]   (147.5,94.5) -- (147.5,56.5) ;
\draw [shift={(147.5,56.5)}, rotate = 270] [color={rgb, 255:red, 74; green, 80; blue, 226 }  ,draw opacity=1 ][fill={rgb, 255:red, 74; green, 80; blue, 226 }  ,fill opacity=1 ][line width=0.75]      (0, 0) circle [x radius= 3.35, y radius= 3.35]   ;
\draw [shift={(147.5,69.5)}, rotate = 90] [color={rgb, 255:red, 74; green, 80; blue, 226 }  ,draw opacity=1 ][line width=0.75]    (10.93,-3.29) .. controls (6.95,-1.4) and (3.31,-0.3) .. (0,0) .. controls (3.31,0.3) and (6.95,1.4) .. (10.93,3.29)   ;
\draw [shift={(147.5,94.5)}, rotate = 270] [color={rgb, 255:red, 74; green, 80; blue, 226 }  ,draw opacity=1 ][fill={rgb, 255:red, 74; green, 80; blue, 226 }  ,fill opacity=1 ][line width=0.75]      (0, 0) circle [x radius= 3.35, y radius= 3.35]   ;
\draw [color={rgb, 255:red, 74; green, 80; blue, 226 }  ,draw opacity=1 ]   (43.5,193) -- (43.5,56.5) ;
\draw [shift={(43.5,56.5)}, rotate = 270] [color={rgb, 255:red, 74; green, 80; blue, 226 }  ,draw opacity=1 ][fill={rgb, 255:red, 74; green, 80; blue, 226 }  ,fill opacity=1 ][line width=0.75]      (0, 0) circle [x radius= 3.35, y radius= 3.35]   ;
\draw [shift={(43.5,118.75)}, rotate = 90] [color={rgb, 255:red, 74; green, 80; blue, 226 }  ,draw opacity=1 ][line width=0.75]    (10.93,-3.29) .. controls (6.95,-1.4) and (3.31,-0.3) .. (0,0) .. controls (3.31,0.3) and (6.95,1.4) .. (10.93,3.29)   ;
\draw [shift={(43.5,193)}, rotate = 270] [color={rgb, 255:red, 74; green, 80; blue, 226 }  ,draw opacity=1 ][fill={rgb, 255:red, 74; green, 80; blue, 226 }  ,fill opacity=1 ][line width=0.75]      (0, 0) circle [x radius= 3.35, y radius= 3.35]   ;

\draw (197,197.4) node [anchor=north west][inner sep=0.75pt]    {$r$};
\draw (3,15.4) node [anchor=north west][inner sep=0.75pt]    {$L_{e_1}$};
\draw (65,203.4) node [anchor=north west][inner sep=0.75pt]    {$1$};
\draw (114,203.4) node [anchor=north west][inner sep=0.75pt]    {$2$};

\end{tikzpicture}
}
\caption{The Hasse diagram of $(P,\leq)$ associated to the hyperplane arrangement in the parameter space of $PH_0(Y^2_{r,L_{e_1}};\mathbb{F})$~\cite{li2023persistent}}
\label{ch4.1:poset}
\end{figure}
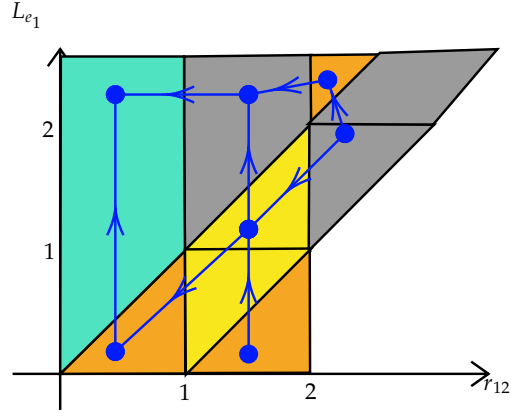

\begin{lemma}
  The construction of $(P,\leq)$ is functorial, i.e., the construction above gives a functor (denoted by $\mathcal{F}_M$) $$\mathcal{F}_M:(\mathbb{R},\leq)^{2}\rightarrow (P,\leq) $$   
\end{lemma}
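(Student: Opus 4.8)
The plan is to verify that the assignment $J_p \mapsto p$ on chambers, together with the rule $\mathcal{F}_M(i) = p$ whenever $i \in J_p$, respects the order relations and composites, so that it is a genuine functor $(\mathbb{R},\leq)^2 \to (P,\leq)$. Since $(\mathbb{R},\leq)^2$ and $(P,\leq)$ are both thin categories, there is no composition data to check beyond well-definedness of the object assignment and monotonicity: it suffices to show that $\mathcal{F}_M$ is a well-defined function on objects and that $i \leq j$ in $(\mathbb{R},\leq)^2$ implies $\mathcal{F}_M(i) \leq \mathcal{F}_M(j)$ in $(P,\leq)$.

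First I would observe that $\mathcal{F}_M$ is well-defined on objects precisely because the chambers $\{J_p\}_{p\in P}$ form a partition of $(\mathbb{R},\leq)^2$ (Definition~\ref{def-isotopy-subdivision}): every point $i \in \mathbb{R}^2$ lies in exactly one chamber $J_p$, so $\mathcal{F}_M(i) := p$ is unambiguous. Next, for monotonicity, take $i \leq j$ in $(\mathbb{R},\leq)^2$ and let $p = \mathcal{F}_M(i)$, $q = \mathcal{F}_M(j)$, so $i \in J_p$ and $j \in J_q$. If $p = q$ there is nothing to prove since $p \leq p$ in any poset. If $p \neq q$, then the pair $(x,y) = (i,j)$ witnesses condition (2) in the definition of the relation $\rightarrow$, namely there exist $x \in J_p$ and $y \in J_q$ with $x \leq y$; hence $p \rightarrow q$, and therefore $p \leq q$ since $\leq$ on $P$ is the transitive closure of $\rightarrow$. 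This establishes that $\mathcal{F}_M$ is order-preserving.

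Finally, to conclude that $\mathcal{F}_M$ is a functor, I would note that a monotone map between posets is the same thing as a functor between the associated thin categories: identities go to identities automatically (if $i \leq i$ then $\mathcal{F}_M(i) \leq \mathcal{F}_M(i)$ and the unique arrow is the identity), and composites are preserved automatically because in a thin category there is at most one morphism between any two objects, so the two sides of $\mathcal{F}_M(g \circ f) = \mathcal{F}_M(g) \circ \mathcal{F}_M(f)$ are forced to coincide whenever both are defined. Thus the construction is functorial.

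I do not expect any serious obstacle here: the content is entirely in the partition property (for well-definedness) and in the already-proven antisymmetry lemma plus the definition of $\rightarrow$ (for monotonicity). The one point that deserves a careful sentence rather than being taken for granted is that $\leq$ on $P$ really is a partial order—but that is exactly the previous lemma, which we may invoke. The mild subtlety worth flagging is that $\rightarrow$ itself need not be transitive, so one genuinely uses the passage to the transitive closure when chaining $i \leq j \leq k$; however, since we only need $i \leq j$ to force $p \leq q$ via a single application of $\rightarrow$, even this is not strictly necessary for the present statement.
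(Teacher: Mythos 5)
Your proof is correct and follows essentially the same route as the paper: define $\mathcal{F}_M$ on points via the partition into chambers, use the definition of $\rightarrow$ to get $p \leq q$ from $i \leq j$, and use thinness of both categories to dispatch identities and composition. You are somewhat cleaner in making explicit that functoriality between thin (poset) categories reduces entirely to well-definedness on objects plus monotonicity, and in deriving $p \leq q$ directly from the witnessing pair $(i,j)$ and the preceding lemma, where the paper instead gestures at Lemma~\ref{lemma:no-inverse} at this step.
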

\begin{proof}
At the object level, $\mathcal{F}_M$ sends $(x,y)\in (\mathbb{R},\leq)^{2}$ to $p\in P$, where $J_p$ is the chamber that contains $(x,y)$.

At the morphism level, $\mathcal{F}_M$ sends $(x,y)\rightarrow (x',y')$ to the unique morphism $p\rightarrow p'$, where $J_p$ represents the chamber that contains $(x,y)$ and $J_p'$ represents the chamber that contains $(x',y')$. Lemma~\ref{lemma:no-inverse} implies that $\mathcal{F}_M$ is well-defined because the chambers in $(\mathbb{R},\leq)^{2}$ are convex, connected, and pairwise disjoint.

Note that when $p=p'$, $(x,y)$ and $(x',y')$ are in the same chamber. Without loss of generality, we assume $(x,y)\leq (x',y')$. Note that $\mathcal{F}_M$ sends $(x,y)\leq (x',y')$ to $\id_p$. Hence, in particular, 
$$\mathcal{F}_M\id_{(x,y)}=\id_p$$

Since $(P,\leq)$ is thin, for all $(x,y), (x',y'),(x'',y'')\in (\mathbb{R},\leq)^{2}$ such that $(x,y)\leq (x',y')\leq (x'',y'')$,
$$\mathcal{F}_M((x',y')\leq (x'',y''))\circ \mathcal{F}_M((x,y)\leq (x',y'))=\mathcal{F}_M((x,y)\leq (x'',y''))$$

In conclusion, $\mathcal{F}_M$ is a functor. 
\end{proof}

Given $M\in\cat{vect}_{\mathbb{F}}^{(\mathbb{R},\leq)^{2} }$. Assume $M$ admits a finite convex isotopy subdivision of $(\mathbb{R},\leq)^{2}$ and let $(P,\leq)$ be the poset of chambers. Define $\widetilde{N}:(P,\leq)\rightarrow \cat{vect}_{\mathbb{F}}$ as follows:
\begin{itemize}
    \item Given $p\in P$, $\widetilde{N}_p=\lim_{J_p}M\cong Mi$ for all $i\in J_p$;
    \item For all $p\leq q\in P$, $$\widetilde{N}(p\leq q)=\begin{cases} \id_{\lim_{J_p}M}, & \mbox{if } p=q \\ \phi_{J_pJ_q} , & \mbox{else } \end{cases}$$
where $\phi_{J_pJ_q}$ is the canonical linear transformation defined in Proposition~\ref{prop-sim-2}.
\end{itemize} 

\begin{proof}[Proof of Theorem~\ref{thm-sim-1'}]
Note that $\widetilde{N}\circ \mathcal{F}_M=N$, where $N$ is constructed in Theorem~\ref{thm-sim-1}. Moreover, Theorem~\ref{thm-sim-1} implies $M\cong N$. Hence $M\cong \widetilde{N}\circ \mathcal{F}_M$.
\end{proof}

\begin{theorem}
Let $M\in\cat{vect}_{\mathbb{F}}^{(\mathbb{R},\leq)^{2}}$. Assume there exists a finite convex isotopy subdivision of $(\mathbb{R},\leq)^{2}$ subordinate to $M$. Then $\mathcal{F}_M^\ast: \cat{vect}_{\mathbb{F}}^{(P,\leq)}\rightarrow\cat{vect}_{\mathbb{F}}^{(\mathbb{R},\leq)^{2}}$ is fully-faithful, where $(P,\leq)$ is the poset of the chambers in $(\mathbb{R},\leq)^{2}$.
\end{theorem}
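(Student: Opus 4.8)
The plan is to check directly that, for every pair of $P$-modules $H,H'$, the pullback map $\eta\mapsto\mathcal{F}_M^\ast\eta$ sending a natural transformation $H\Rightarrow H'$ to the natural transformation $\mathcal{F}_M^\ast H\Rightarrow\mathcal{F}_M^\ast H'$ is a bijection. Recall $\mathcal{F}_M^\ast H=H\circ\mathcal{F}_M$, so $(\mathcal{F}_M^\ast H)_{(x,y)}=H_p$ whenever $(x,y)$ lies in the chamber $J_p$. Faithfulness is the easy half: since the chambers partition $(\mathbb{R},\leq)^{2}$, every $J_p$ is nonempty and $\mathcal{F}_M$ is surjective on objects, so $\mathcal{F}_M^\ast\eta$ determines $\eta_p=(\mathcal{F}_M^\ast\eta)_{(x,y)}$ for any $(x,y)\in J_p$, hence determines $\eta$.

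For fullness I would start with a natural transformation $\theta\colon\mathcal{F}_M^\ast H\Rightarrow\mathcal{F}_M^\ast H'$ and descend it along $\mathcal{F}_M$ by setting $\eta_p:=\theta_{(x,y)}$ for an arbitrary point $(x,y)\in J_p$. The crucial point is well-definedness. Given $(x,y),(x',y')\in J_p$, connectedness of the chamber yields a zigzag $u_0=(x,y),u_1,\dots,u_n=(x',y')$ with all $u_i\in J_p$ and consecutive terms comparable; since $\mathcal{F}_M$ collapses $J_p$ to the single object $p$, the structure maps of $\mathcal{F}_M^\ast H$ and $\mathcal{F}_M^\ast H'$ between $u_i$ and $u_{i+1}$ are the identities of $H_p$, resp.\ $H'_p$, so naturality of $\theta$ across each step forces $\theta_{u_i}=\theta_{u_{i+1}}$, and chaining these equalities along the zigzag gives $\theta_{(x,y)}=\theta_{(x',y')}$. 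Taking the chosen representative of $J_p$ to be $(x,y)$ itself in the definition of $\eta_p$ shows simultaneously that $\mathcal{F}_M^\ast\eta=\theta$, so what remains is to verify that $\eta$ is natural.

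For naturality, consider first a relation $p\to q$ in the defining relation of $(P,\leq)$ with $p\neq q$: the construction supplies $x\in J_p$ and $y\in J_q$ with $x\leq y$, and $\mathcal{F}_M$ sends the arrow $x\leq y$ to $p\to q$, so $(\mathcal{F}_M^\ast H)(x\leq y)=H(p\leq q)$ and likewise for $H'$; naturality of $\theta$ at $x\leq y$ together with $\theta_x=\eta_p$ and $\theta_y=\eta_q$ then yields $H'(p\leq q)\circ\eta_p=\eta_q\circ H(p\leq q)$. A general relation $p\leq q$ in $(P,\leq)$ is a finite chain $p=p_0\to p_1\to\cdots\to p_n=q$, and since $(P,\leq)$ is thin the composite of the $n$ naturality squares just obtained is precisely the square for $p\leq q$. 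Hence $\eta$ is a morphism of $P$-modules with $\mathcal{F}_M^\ast\eta=\theta$, which completes fullness.

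The one step I expect to be the real obstacle is the well-definedness of $\eta_p$, and it is exactly here that the hypotheses are used: connectedness of the chambers is what makes $\theta$ constant on each fibre $\mathcal{F}_M^{-1}(p)$, while convexity has already been spent (through Lemma~\ref{lemma:no-inverse}) in making $(P,\leq)$ a poset and $\mathcal{F}_M$ a functor at all; the two examples following Proposition~\ref{prop-sim-2} show that neither hypothesis can be dropped. It may be cleanest to extract the underlying principle as a separate lemma — pullback along a poset morphism that is surjective on objects, has zigzag-connected fibres, and realises each of the defining relations of the target by a comparable pair upstairs is fully faithful — and then obtain the theorem as the case $\pi=\mathcal{F}_M$.
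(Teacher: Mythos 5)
Your argument is correct, and its overall shape coincides with the paper's: in both proofs the candidate preimage of a natural transformation $\theta\colon\mathcal{F}_M^\ast H\Rightarrow\mathcal{F}_M^\ast H'$ is obtained by restricting $\theta$ to one representative point of each chamber, so the real content is well-definedness and naturality of that descent. The difference is in how that content is discharged. The paper compresses fullness into an invocation of ``the universal property of the limit,'' writing $\alpha_p\colon\lim_{J_p}M\to\lim_{J_p}M'$ (where the $M,M'$ should read $N\circ\mathcal{F}_M$ and $N'\circ\mathcal{F}_M$ restricted to $J_p$, whose limits are just $N_p$ and $N'_p$ since the restricted diagrams are constant), and it does not explicitly verify that the resulting $(\alpha_p)_p$ pulls back to the given $(\beta_i)_i$. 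Your version replaces this with the elementary observation that the structure maps of $\mathcal{F}_M^\ast H$ and $\mathcal{F}_M^\ast H'$ within a chamber are identities, so naturality of $\theta$ forces $\theta$ to be constant along zigzags inside a chamber; connectedness of the chambers then gives well-definedness, choosing the representative to be the starting point gives $\mathcal{F}_M^\ast\eta=\theta$ for free, and naturality is checked by lifting each covering relation $p\to q$ to a comparable pair $x\leq y$. This is more transparent, works verbatim for arbitrary $P$-modules $H,H'$, and correctly isolates which hypothesis is doing which job (connectedness for well-definedness, convexity already consumed in making $\mathcal{F}_M$ a well-defined poset map). Your closing suggestion to extract a general lemma — pullback along a surjective poset morphism with zigzag-connected fibres that lifts covering relations is fully faithful — is sound; it is precisely what you proved.
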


\begin{proof}
    Let $\{J_p\mid p\in (P,\leq)\}$ be the set of chambers of the finite partition of $(\mathbb{R},\leq)^{2}$ that satisfyies the conditions of Theorem~\ref{thm-sim-1}. Consider $\Phi:\hom(N,N')\rightarrow \hom(N\circ \mathcal{F}_M,N'\circ \mathcal{F}_M)$ where $\Phi((\alpha_p)_{p\in (P,\leq)})=(\beta_i)_{i\in (\mathbb{R},\leq)^{2}}$ and $\beta_i=\alpha_p$ when $i\in J_p$. It is clear that $\Phi$ is injective.

    Let $(\beta_i)_{i\in (\mathbb{R},\leq)^{2}}\in \hom(N\circ \mathcal{F}_M,N'\circ \mathcal{F}_M)$. Given $p\in (P,\leq)$, the universal property of the limit implies that there exists a unique morphism $\alpha_p:\lim_{J_p}M\rightarrow \lim_{J_p}M'$. Assume $p\leq q\in (P,\leq)$, the universal property of the limit implies that there exists a unique morphism $\lim_{J_p}M\rightarrow \lim_{J_q}M'$. Hence $(\alpha_p)_{p\in(P,\leq)}$ is a morphism in  $\cat{vect}_{\mathbb{F}}^{(P,\leq)}$. Therefore, $\Phi$ is surjective. 

    In conclusion, $\mathcal{F}_M^\ast$ is fully-faithful.
\end{proof}

 
\section{Thin Polycode Modules}\label{TPM}
We have seen in Lemma~\ref{thinpoly} that polytope modules are thin and indecomposable. In this section, we will show that the converse of Lemma~\ref{thinpoly} is also true (up to isomorphism) when the indexing poset of the persistence module is $(\mathbb{R},\leq)^2$.

\begin{lemma}\label{chap3.3:lemma1}
Every indecomposable thin persistence module $M\in\cat{vect}_{\mathbb{F}}^{(\mathbb{R}, \leq)^2}$ has connected support. Moreover, the support is a subposet of $(\mathbb{R}, \leq)^2$.
\end{lemma}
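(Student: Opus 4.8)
The plan is to prove the two assertions of Lemma~\ref{chap3.3:lemma1} in the order stated: first that an indecomposable thin module $M$ has connected support, and then that the support (as a subposet of $(\mathbb{R},\leq)^2$) is in fact an interval, i.e.\ convex as well. For the connectedness, I would argue by contrapositive. Suppose $\mathrm{supp}(M)$ is disconnected, so it decomposes (as a subset of the poset) into a disjoint union $S_1 \sqcup S_2$ of nonempty subsets with no comparable pairs $s_1 \in S_1$, $s_2 \in S_2$ — here ``no comparabilities between the pieces'' is exactly the right notion, since a morphism $M(i\le j)$ can only be nonzero when $i\le j$. Define submodules $N$ and $N'$ by $Ni = Mi$ for $i \in S_1$ and $0$ otherwise, and $N'i = Mi$ for $i \in S_2$ (or the complement of $S_1$) and $0$ otherwise, with the structure maps inherited from $M$. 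The key point is that this is well-defined as a pair of subrepresentations: any structure map $M(i\le j)$ with $i \in S_1$ and $j \notin S_1$ must be zero, because either $j \notin \mathrm{supp}(M)$ (target is $0$) or $j \in S_2$ (impossible, as there are no comparabilities $S_1 \le S_2$). Then $M = N \oplus N'$ pointwise, contradicting indecomposability. Thinness is not even needed for this half.

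For the second assertion — that $\mathrm{supp}(M)$ is convex, hence an interval — I would again suppose otherwise: there exist $i \le j \le k$ in $(\mathbb{R},\leq)^2$ with $i,k \in \mathrm{supp}(M)$ but $j \notin \mathrm{supp}(M)$, so $Mj = 0$. Then the composite $M(j\le k)\circ M(i\le j) = M(i\le k)$ factors through the zero space $Mj$, so $M(i\le k) = 0$. This already shows that $\mathrm{supp}(M)$ fails to have a structure map between $i$ and $k$; the goal is to promote this ``local hole'' into a genuine direct sum decomposition of $M$, which is where thinness is essential. The natural strategy is to partition $\mathrm{supp}(M)$ according to reachability within the support: since $M$ is thin, each $Mi$ for $i$ in the support is a copy of $\mathbb{F}$, and one can try to group points of the support into classes so that no nonzero structure map of $M$ crosses between classes, then split off the corresponding submodules. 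Concretely, I would consider the equivalence relation on $\mathrm{supp}(M)$ generated by ``$i \sim j$ whenever $i,j$ are comparable and the structure map between them is nonzero'', and show each equivalence class $C$ spans a subrepresentation $\mathbb{F}C$ that is a direct summand. Indecomposability then forces a single class.

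The main obstacle is exactly this second step: it is \emph{not} automatic that a hole in the support produces a decomposition, because even when $M(i\le k)=0$ there may be intermediate support points $j'$ (comparable to both $i$ and $k$, with $i\le j'\le k$ or some zigzag) through which a nonzero composite route survives, re-gluing the two ``sides''. So the real content is showing that if $\mathrm{supp}(M)$ is genuinely non-convex — meaning after identifying the equivalence classes above there is more than one — then the classes are pairwise incomparable \emph{in the relevant sense}, i.e.\ any comparable pair straddling two classes carries the zero map, which is precisely the definition of the relation. The cleanest route is: define the equivalence classes by nonzero-structure-map connectivity; observe each class $C$, being a subset of $(\mathbb{R},\leq)^2$ on which all induced maps are isomorphisms (scalars in $\mathbb{F}^\times$) or at least carry a coherent rank-one structure, yields a well-defined thin submodule $\mathbb{F}C \subseteq M$ with $M \cong \bigoplus_C \mathbb{F}C$; here one must check coherence of the scalars, but since $(\mathbb{R},\leq)^2$ is thin and each class is ``connected through nonzero maps,'' a choice of basis vector at one point of $C$ propagates consistently (no monodromy issues for rank-one data once we are inside a single nonzero-connected class — any two routes give the same scalar because all the relevant composites in $M$ agree). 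Indecomposability of $M$ then yields exactly one class, giving connectedness \emph{and} convexity simultaneously, since a non-convex support would necessarily break into at least two such classes (the point $j\notin\mathrm{supp}(M)$ severs every route between $i$ and $k$, and any surviving route would itself be a nonzero composite, contradicting $j$ being the only intermediate point — so one should set up the argument to handle general zigzag routes, not just the straight $i\le j\le k$ chain, which is the delicate bookkeeping I would be most careful about).
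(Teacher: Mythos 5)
Your connectedness argument is essentially the paper's: split the support into two nonempty pieces with no comparable pairs crossing, restrict $M$ to each piece to obtain subrepresentations $N$ and $N'$, and conclude $M\cong N\oplus N'$, contradicting indecomposability. The one real difference is which comparability you use. The paper first refines the order on $\mathrm{supp}(M)$ to ``$a\le b$ iff $a\le b$ in $(\mathbb{R},\le)^2$ \emph{and} $M(a\le b)\neq 0$'' and then runs the identical split using zigzag-connectivity in that refined relation; you use ambient comparability only. Your version is correct, but it yields a strictly weaker connectedness than the refined one, which is the version the paper actually needs downstream (Lemma~\ref{chap3.3:lemma2} and the proof of Theorem~\ref{chap3.3:thm} use zigzag paths in the refined $(P,\le)$).

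The genuine gap is in your reading of the second assertion. You take ``the support is a subposet of $(\mathbb{R},\le)^2$'' to mean ``the support is an interval, i.e.\ convex,'' and you spend most of the proposal on a convexity argument you yourself flag as incomplete. But the lemma does not claim convexity; that is established later, in the proof of Theorem~\ref{chap3.3:thm}, using Lemma~\ref{chap3.3:lemma2}. What the second assertion actually records, as the paper's proof shows, is that the \emph{refined} relation above is a genuine partial order on $\mathrm{supp}(M)$. The nontrivial point is transitivity, and that is precisely where thinness is used: if $M(a\le b)\neq 0$ and $M(b\le c)\neq 0$, then since every nonzero space in the support is one-dimensional these maps are isomorphisms, so $M(a\le c)=M(b\le c)\circ M(a\le b)\neq 0$. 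Your proposal never addresses this, so the second assertion is left unproved. (Your side remark that ``any two routes give the same scalar'' in a nonzero-connected class is also unjustified at this stage --- that is exactly the content of Theorem~\ref{chap3.3:thm}, and it fails for three or more parameters, as Example~\ref{ex-dim3} shows --- but since convexity is not the target of this lemma, that is secondary.)
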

\begin{proof}
Let $P=(P_0,P_1)$ be the support\footnote{Given a quiver representation $M$ of a quiver $Q=(Q_0,Q_1)$, the support quiver $P$ of $M$ consists of the following data: its vertices are the vertices $a\in Q_0$ such that $Ma\neq 0$, and the arrows of $P$ are the arrows $M(a\rightarrow a')\neq 0$ where $a\rightarrow a'\in Q_1$.} of $M$. Define a binary relation $\leq$ on $P_0$ as follows: given $a,b\in P_0$, $a\leq b$ iff $a\leq b$ in $(\mathbb{R}, \leq)^2$ and $M(a\leq b)\neq 0$. Given $a\in P_0$, $M(a\leq a)=\id_{Ma}$. Moreover, given $a,b,c\in P_0$ where $M(a\leq b)\neq 0$ and $M(b\leq c)\neq 0$, note that $M(a\leq c)=M(a\leq b)\circ M(b\leq c)\neq 0$ because $M$ is a thin representation. Therefore, $\leq$ is a partial order on $P_0$. We denote this poset $P_0$ with the partial order $\leq$ as $(P,\leq)$. It is clear that $(P,\leq)$ is a subposet of $(\mathbb{R}, \leq)^2$.

We now claim that $(P,\leq)$ is connected: Assume $(P,\leq)$ is not connected, then there exists $a,b\in P$ such that there is no zigzag path connecting $a$ and $b$. Define $$S=\{x\in P: \mbox{$x$ and $a$ is joined by a zigzag path in $(P,\leq)$ }\}$$ and $T=S^c$. Define $N,N':(\mathbb{R},\leq)^2\rightarrow\cat{vect}_{\mathbb{F}}$ by

$$Ni=\begin{cases} Mi, & \mbox{if } i\in S \\ 0, & \mbox{else }  \end{cases}, \quad N(i\leq j)=\begin{cases} M(i\leq j), & \mbox{if } i\leq j\in S \\ 0, & \mbox{else }  \end{cases}$$
and $$N'i=\begin{cases} Mi, & \mbox{if } i\in T \\ 0, & \mbox{else }  \end{cases}, \quad N'(i\leq j)=\begin{cases} M(i\leq j), & \mbox{if } i\leq j\in T \\ 0, & \mbox{else }  \end{cases}$$

It is clear that $M\cong N\oplus N'$, and both $N$ and $N'$ are not trivial since $Na\neq 0$ and $N'b\neq 0$. Contradiction. Therefore, $(P,\leq)$ is connected.
\end{proof}

\begin{lemma}\label{chap3.3:lemma2}
Let $M\in\cat{Vect}_{\mathbb{F}}^{(\mathbb{R}, \leq)^2}$ be a thin persistence module and let $(P,\leq)$ be the support of $M$. Then for any $a,b\in P$ such that $a<b$, $M(a<b): Ma\rightarrow Mb$ is not $0$ if there exists a zigzag path from $a$ to $b$ in $(P,\leq)$.
\end{lemma}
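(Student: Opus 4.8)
Proof proposal. The plan is to induct on the length $n=l(p)$ of a zigzag path $p\colon a=x_0-x_1-\cdots-x_n=b$ in $(P,\leq)$ witnessing the hypothesis. The one non-formal tool used repeatedly is: if $e$ is any edge of a zigzag path in $(P,\leq)$, i.e. a comparable pair $u\leq v$ whose corresponding structure map of $M$ is nonzero, and $R_e=\{z:u\leq z\leq v\}$ is the rectangle it spans, then $R_e\subseteq P$ and every structure map of $M$ between comparable points of $R_e$ is an isomorphism. Indeed $M(u\leq v)$ is a nonzero map of one-dimensional spaces, hence invertible, and for $u\leq z\leq w\leq v$ the factorization $M(u\leq v)=M(w\leq v)\circ M(z\leq w)\circ M(u\leq z)$ forces each factor, and therefore $Mz$ and $Mw$, to be nonzero. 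For the base case $n=1$ the pair $a<b$ is itself an edge. For $n=2$, write $p\colon a-x_1-b$; the pattern $a\geq x_1\geq b$ cannot occur since $a<b$, and in each of the remaining three patterns functoriality together with thinness gives the claim at once (for instance, if $a\leq x_1\geq b$ then $M(a\leq x_1)=M(b\leq x_1)\circ M(a\leq b)$ is nonzero, so $M(a\leq b)\neq 0$).

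For the inductive step $(n\geq 3)$ I run a sequence of reductions, each of which either finishes the proof, or replaces $p$ by a strictly shorter zigzag path from $a$ to $b$ in $(P,\leq)$, or splits $p$ at an auxiliary point to which the inductive hypothesis applies. First, if $p$ is not simple, delete a loop at a repeated vertex; if $p$ has two consecutive co-directed arrows, compose them into a single edge (a composite of isomorphisms is nonzero). So we may assume $p$ is simple and reduced. Next, if $x_1\leq a$ then $x_1<b$, and applying the inductive hypothesis to $x_1-\cdots-x_n$ gives $M(x_1\leq b)\neq 0$; since $x_1\leq a\leq b$, the factorization $M(x_1\leq b)=M(a\leq b)\circ M(x_1\leq a)$ forces $M(a\leq b)\neq 0$. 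Dually, if $x_{n-1}\geq b$ use $x_0-\cdots-x_{n-1}$. So we may assume the first and last arrows of $p$ point upward, which for an alternating path forces $n$ odd and $a=x_0\leq x_1\geq x_2\leq\cdots\geq x_{n-1}\leq x_n=b$. Finally, if some interior vertex $x_i$ lies in $[a,b]$, in $\uparrow b$, or in $\downarrow a$, split $p$ at $x_i$: each of $x_0-\cdots-x_i$ and $x_i-\cdots-x_n$ has length $<n$, and the inductive hypothesis applied to whichever endpoints are comparable, together with a factorization of $M$ through $a\leq b$, yields $M(a\leq b)\neq 0$. Hence we may assume every interior vertex of $p$ lies in $\Omega:=\mathbb{R}^2\setminus\bigl([a,b]\cup \uparrow b\cup \downarrow a\bigr)$.

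The decisive two-dimensional input is that for every $a<b$ the open set $\Omega$ has exactly two connected components, which I label $\Omega_{NW}$ and $\Omega_{SE}$ (a direct check; the degenerate positions $a_1=b_1$ or $a_2=b_2$, where $[a,b]$ is a segment, are similar — there the interior vertices can never all lie in one component). If the interior vertices of $p$ do not all lie in the same component, then for some $i$ with $1\leq i\leq n-2$ the two endpoints of the edge $e_i$ lie in different components, so the monotone segment $[x_i,x_{i+1}]\subseteq R_{e_i}$ must meet $[a,b]\cup \uparrow b\cup \downarrow a$ at a point $z$; since $z\in R_{e_i}\subseteq P$ is comparable to $x_i$ and to $x_{i+1}$, attaching $z$ to the sub-paths $x_0-\cdots-x_i$ and $x_{i+1}-\cdots-x_n$ produces zigzag paths of length $<n$ from $a$ to $z$ and from $z$ to $b$, and we conclude exactly as in the last splitting step. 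Otherwise all interior vertices lie in one component, say $\Omega_{NW}$; then $x_1\geq a$ places $x_1$ in the upper part of $\Omega_{NW}$ (with $a_1\leq(x_1)_1<b_1$ and $(x_1)_2>b_2$) and $x_{n-1}\leq b$ places $x_{n-1}$ in the left part (with $(x_{n-1})_1<a_1$ and $a_2<(x_{n-1})_2\leq b_2$), so $x_{n-1}\leq x_1$. Therefore $R_{e_0}=[a,x_1]$ and $R_{e_{n-1}}=[x_{n-1},b]$ intersect in $[\,a\vee x_{n-1},\,x_1\wedge b\,]\neq\emptyset$; picking $z$ there, we get $a\leq z\leq b$ with $M(a\leq z)\neq 0$ (as $z\in R_{e_0}$) and $M(z\leq b)\neq 0$ (as $z\in R_{e_{n-1}}$), hence $M(a\leq b)=M(z\leq b)\circ M(a\leq z)\neq 0$. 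The component $\Omega_{SE}$ is symmetric, and the induction is complete.

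The main obstacle is the geometry of the last paragraph: establishing that $\Omega$ has exactly two components for every $a<b$ (including the degenerate positions), pinning down where an interior vertex of $\Omega$ can sit under the monotonicity constraints inherited from the reductions, and checking the splitting arguments when the auxiliary point $z$ (or an interior vertex $x_i$) lands on $a$, on $b$, or on a corner of $[a,b]$ — in each such case the argument still goes through but must be spelled out. It is precisely this two-component statement that fails in $\mathbb{R}^n$ for $n\geq 3$. Everything else is bookkeeping with functoriality and the thinness of $M$.
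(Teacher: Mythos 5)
Your proof is correct and takes a genuinely different route from the paper's at the decisive step. Both arguments induct on the length of a zigzag path, first disposing of self-intersections and merging co-directed arrows. But the paper's inductive step then works combinatorially: after noting that a reduced, self-avoiding staircase from $a<b$ cannot be all-bad or strictly alternating, it locates a subpattern BBG, GBB, BGG, or GGB and replaces it by a strictly shorter subpath; the fact that the replacement vertices and arrows still lie in the support is used only implicitly. You instead make the needed fact explicit as a ``rectangle lemma'' (an edge $u\leq v$ of the support forces the whole box $[u,v]$ into the support, with all structure maps there invertible), and your decisive step is topological rather than combinatorial: after normalizing to a simple alternating path whose first and last arrows go up, you use that $\Omega=\mathbb{R}^2\setminus\bigl([a,b]\cup\uparrow b\cup\downarrow a\bigr)$ has exactly two components. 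This forces either a crossing point $z$ on one of the edges (which the rectangle lemma places in the support, so you can split and apply the inductive hypothesis) or the order relation $x_{n-1}\leq x_1$, which lets you factor $M(a\leq b)$ through a point of $[a,x_1]\cap[x_{n-1},b]$. What your approach buys is that the two-dimensional input is isolated in a single transparent statement---the two-component property of $\Omega$---and it is precisely this statement whose failure in $\mathbb{R}^n$, $n\geq 3$, the paper's Example~\ref{ex-dim3} exhibits; the paper's substitution argument, by contrast, uses the planarity in a more diffuse way. One could tighten your write-up by spelling out the degenerate positions $a_1=b_1$ or $a_2=b_2$ (you correctly flag that there the interior vertices cannot all sit in one component, which routes those cases to the crossing-point branch), but the argument is sound.
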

\begin{proof}
Since $(\mathbb{R}, \leq)^2$ is a thin category, we can assume such a zigzag path in $(P,\leq)$ consists of horizontal and vertical arrows. We say an arrow on the zigzag path is a good arrow if the orientation of the arrow coincides with the orientation of the zigzag path; otherwise, we say the arrow is a bad arrow. WLOG, we may assume all zigzag paths are reduced: the zigzag path doesn't have two (or more) consecutive horizontal or vertical arrows, i.e., we either
\begin{itemize}
\item combine the two (or more) consecutive horizontal or vertical arrows if they are all good arrows or all bad arrows;
\item get a new (shorter) arrow from combining a good horizontal (vertical, resp) arrow with a bad horizontal (vertical, resp) arrow. 
\begin{itemize}
\item The new arrow is good if the length of the original good arrow is strictly greater than the length of the original bad arrow;
\item The new arrow is bad if the length of the original good arrow is strictly less than the length of the original bad arrow;
\item The new arrow is a vertex if the length of the original good arrow is equal to the length of the original bad arrow.
\end{itemize}

\end{itemize}
Induction on the length of the zigzag path.
\begin{itemize}
\item length$=1$. There is nothing to show.
\item length$=N\rightarrow N+1$. 
\begin{itemize}
\item If the zigzag path has at least one self-intersection (denote a self-intersection by $c$) then we can write the zigzag path as follows $$a-x_1-\cdots -x_m-c-y_1-\cdots-y_n-c-z_1-\cdots z_{N-m-n-2}-b$$ where $n\geq 1$. (Note that the induction hypothesis strikes when $c=a$ or $c=b$.) Therefore, $$a-x_1-\cdots -x_m-c-z_1-\cdots z_{N-m-n-2}-b$$ is a zigzag path from $a$ to $b$ with a maximum length of $N$. By induction hypothesis, $M(a<b)\neq 0$.
\item Now we assume the zigzag path has no self-intersection. 
\begin{itemize}
\item If the zigzag path consists of good arrows, then $M(a<b)\neq 0$;
\item If we have a bad (B) arrow on the zigzag path, then 
\begin{itemize}
\item there exist two consecutive good (G) arrows adjacent to the bad (B) arrow, i.e., BGG or GGB;\\
or
\item there exist two consecutive bad (B) arrows adjacent to the good (G) arrow, i.e., GBB or BBG
\end{itemize}
Otherwise, 
\begin{itemize}
\item the zigzag path consists of bad arrows;\\
or
\item good arrow and bad arrow alternate on the zigzag path. 
\end{itemize}
contradicting the assumption that $a<b$. 
\end{itemize}

Because $(\mathbb{R}^2,\leq)$ is thin, we can substitute BBG/GBB/GGB/BGG with two new arrows (maybe degenerate). Therefore, the length of the new zigzag path is at most $N$. By the induction hypothesis, $M(a<b)\neq 0$.

\end{itemize}
\end{itemize}
In conclusion, if there exists a zigzag path from $a<b$ in $(P,\leq)$, then $M(a<b)\neq 0$.
\end{proof}

\begin{proof}[Proof of Theorem\ref{chap3.3:thm}]
Let $(P,\leq)$ be the support of $M$. Lemma~\ref{chap3.3:lemma1} implies that $(P,\leq)$ is connected. Moreover, Lemma~\ref{chap3.3:lemma2} implies that $(P,\leq)$ is convex: otherwise, there exists $a\leq c\leq b\in (\mathbb{R},\leq)^2$ such that $a,b\in P$ and $c\notin P$. Hence $M(a\leq b)=M(c\leq b)\circ M(a\leq c)=0$ because $Mc=0$. Contradiction. Hence $(P,\leq)$ is convex. 

Now we construct the morphism between $M$ and $\mathbb{F}P$. Fix $a\in P$, define $\alpha: M\Rightarrow \mathbb{F}P$ as follows: for all $b\in P$ and $x\in Mb$, 
$$\alpha_{b}(x) = \begin{cases}M(a{\overset{}{\rightsquigarrow}}b)x, & \mbox{if } b\in P \\ 0, & \mbox{else } \end{cases}$$
Lemma~\ref{prop-lemma-1} ensures $\alpha_b$ is well-defined. It is clear that $\alpha$ is a natural transformation, and it is a natural isomorphism because $\alpha_{b}$ is invertible for all $b\in P$, where $$\alpha_{b}^{-1}(x)=M(b{\overset{}{\rightsquigarrow}}a)x=M(a{\overset{}{\rightsquigarrow}}b)^{-1}x$$
\end{proof}


The following example shows that both Theorem \ref{thm-sim-1'} and Theorem \ref{chap3.3:thm} may fail when a persistence module has more than two parameters.

\begin{example}\label{ex-dim3} 
Define $Q=\{(x,y,z)\in\{0,1\}^3\mid 1\leq x+y+z\leq 2\}\subseteq \mathbb{Z}^3$. Equip $Q$ with the partial order induced by the product order on $(\mathbb{Z},\leq)^3$. Given $m\in\mathbb{R}-\{0\}$, consider a thin persistence module $M:(\mathbb{Z},\leq)^3\rightarrow \cat{vect}_{\mathbb{F}}$ where $Mi=\begin{cases} \mathbb{F}, & \mbox{if } i\in Q \\ 0, & \mbox{else } \end{cases}$ and $M(i\leq j)(x)=\begin{cases}  0, & \mbox{if } i\notin Q \mbox{ or } j\notin Q\\
mx, & \mbox{if } i=(0,1,0)\mbox{ and } j=(1,1,0)\\x, & \mbox{else }\end{cases}$. It is clear that $M$ is thin and indecomposable for all $m\neq 0$, and M is isomorphic to the polytope module $\mathbb{F}Q$ if and only if $m=1$.

Since $Mi\cong Mj$ for all $i,j\in Q$ and $Q$ is connected, $M$ has only one (non-trivial) chamber. Therefore, the poset of chambers $(P,\leq)$ is a singleton set. Fix $m\neq 1$. Assume there exists $N\in \cat{vect}_{\mathbb{F}}^{(P,\leq)}$ such that $M\cong N\circ \mathcal{F}_M$. Note that $Np=\mathbb{F}$ for all $p\in P$ and $P=\{p\}$; therefore, $M$ is isomorphic to the polytope module $\mathbb{F}Q$, which is a contradiction.
\end{example} 
\backmatter





\bibliography{app}%


\begin{thebibliography}{8}
\ifx \bisbn   \undefined \def \bisbn  #1{ISBN #1}\fi
\ifx \binits  \undefined \def \binits#1{#1}\fi
\ifx \bauthor  \undefined \def \bauthor#1{#1}\fi
\ifx \batitle  \undefined \def \batitle#1{#1}\fi
\ifx \bjtitle  \undefined \def \bjtitle#1{#1}\fi
\ifx \bvolume  \undefined \def \bvolume#1{\textbf{#1}}\fi
\ifx \byear  \undefined \def \byear#1{#1}\fi
\ifx \bissue  \undefined \def \bissue#1{#1}\fi
\ifx \bfpage  \undefined \def \bfpage#1{#1}\fi
\ifx \blpage  \undefined \def \blpage #1{#1}\fi
\ifx \burl  \undefined \def \burl#1{\textsf{#1}}\fi
\ifx \doiurl  \undefined \def \doiurl#1{\url{https://doi.org/#1}}\fi
\ifx \betal  \undefined \def \betal{\textit{et al.}}\fi
\ifx \binstitute  \undefined \def \binstitute#1{#1}\fi
\ifx \binstitutionaled  \undefined \def \binstitutionaled#1{#1}\fi
\ifx \bctitle  \undefined \def \bctitle#1{#1}\fi
\ifx \beditor  \undefined \def \beditor#1{#1}\fi
\ifx \bpublisher  \undefined \def \bpublisher#1{#1}\fi
\ifx \bbtitle  \undefined \def \bbtitle#1{#1}\fi
\ifx \bedition  \undefined \def \bedition#1{#1}\fi
\ifx \bseriesno  \undefined \def \bseriesno#1{#1}\fi
\ifx \blocation  \undefined \def \blocation#1{#1}\fi
\ifx \bsertitle  \undefined \def \bsertitle#1{#1}\fi
\ifx \bsnm \undefined \def \bsnm#1{#1}\fi
\ifx \bsuffix \undefined \def \bsuffix#1{#1}\fi
\ifx \bparticle \undefined \def \bparticle#1{#1}\fi
\ifx \barticle \undefined \def \barticle#1{#1}\fi
\bibcommenthead
\ifx \bconfdate \undefined \def \bconfdate #1{#1}\fi
\ifx \botherref \undefined \def \botherref #1{#1}\fi
\ifx \url \undefined \def \url#1{\textsf{#1}}\fi
\ifx \bchapter \undefined \def \bchapter#1{#1}\fi
\ifx \bbook \undefined \def \bbook#1{#1}\fi
\ifx \bcomment \undefined \def \bcomment#1{#1}\fi
\ifx \oauthor \undefined \def \oauthor#1{#1}\fi
\ifx \citeauthoryear \undefined \def \citeauthoryear#1{#1}\fi
\ifx \endbibitem  \undefined \def \endbibitem {}\fi
\ifx \bconflocation  \undefined \def \bconflocation#1{#1}\fi
\ifx \arxivurl  \undefined \def \arxivurl#1{\textsf{#1}}\fi
\csname PreBibitemsHook\endcsname

\bibitem[\protect\citeauthoryear{Lesnick}{2015}]{lesnick2015theory}
\begin{barticle}
\bauthor{\bsnm{Lesnick}, \binits{M.}}:
\batitle{The theory of the interleaving distance on multidimensional persistence modules}.
\bjtitle{Foundations of Computational Mathematics}
\bvolume{15}(\bissue{3}),
\bfpage{613}--\blpage{650}
(\byear{2015})
\end{barticle}
\endbibitem

\bibitem[\protect\citeauthoryear{Botnan and Crawley-Boevey}{2020}]{botnan2020decomposition}
\begin{barticle}
\bauthor{\bsnm{Botnan}, \binits{M.}},
\bauthor{\bsnm{Crawley-Boevey}, \binits{W.}}:
\batitle{Decomposition of persistence modules}.
\bjtitle{Proceedings of the American Mathematical Society}
\bvolume{148}(\bissue{11}),
\bfpage{4581}--\blpage{4596}
(\byear{2020})
\end{barticle}
\endbibitem

\bibitem[\protect\citeauthoryear{Ghrist}{2008}]{ghrist2008barcodes}
\begin{barticle}
\bauthor{\bsnm{Ghrist}, \binits{R.}}:
\batitle{Barcodes: the persistent topology of data}.
\bjtitle{Bulletin of the American Mathematical Society}
\bvolume{45}(\bissue{1}),
\bfpage{61}--\blpage{75}
(\byear{2008})
\end{barticle}
\endbibitem

\bibitem[\protect\citeauthoryear{Buchet and Escolar}{2020a}]{BE-2020}
\begin{botherref}
\oauthor{\bsnm{Buchet}, \binits{M.}},
\oauthor{\bsnm{Escolar}, \binits{E.G.}}:
The whole in the parts: Putting $n$d persistence modules inside indecomposable $(n+1)$d ones
(2020)
{\href{https://arxiv.org/abs/2012.02467}{{arXiv:2012.02467}}}
\end{botherref}
\endbibitem

\bibitem[\protect\citeauthoryear{Buchet and Escolar}{2020b}]{BE-2019}
\begin{barticle}
\bauthor{\bsnm{Buchet}, \binits{M.}},
\bauthor{\bsnm{Escolar}, \binits{E.G.}}:
\batitle{Every $1$d persistence module is a restriction of some indecomposable $2$d persistence module}.
\bjtitle{Journal of Applied and Computational Topology}
\bvolume{4}(\bissue{3}),
\bfpage{387}--\blpage{424}
(\byear{2020})
\end{barticle}
\endbibitem

\bibitem[\protect\citeauthoryear{Asashiba et~al.}{2022}]{asashiba2018interval}
\begin{barticle}
\bauthor{\bsnm{Asashiba}, \binits{H.}},
\bauthor{\bsnm{Buchet}, \binits{M.}},
\bauthor{\bsnm{Escolar}, \binits{E.G.}},
\bauthor{\bsnm{Nakashima}, \binits{K.}},
\bauthor{\bsnm{Yoshiwaki}, \binits{M.}}:
\batitle{On interval decomposability of 2d persistence modules}.
\bjtitle{Computational Geometry}
\bvolume{105},
\bfpage{101879}
(\byear{2022})
\end{barticle}
\endbibitem

\bibitem[\protect\citeauthoryear{Miller}{2020}]{miller2020homological}
\begin{botherref}
\oauthor{\bsnm{Miller}, \binits{E.}}:
Homological algebra of modules over posets
(2020)
{\href{https://arxiv.org/abs/2008.00063}{{arXiv:2008.00063}}}
\end{botherref}
\endbibitem

\bibitem[\protect\citeauthoryear{Li}{2023}]{li2023persistent}
\begin{botherref}
\oauthor{\bsnm{Li}, \binits{W.}}:
Persistent homology of configuration spaces of trees
(2023)
{\href{https://arxiv.org/abs/2310.05303}{{arXiv:2310.05303}}}
\end{botherref}
\endbibitem

\end{thebibliography}

\end{document}